\newtheorem{theorem}{Theorem}[section]
\newtheorem{lemma}[theorem]{Lemma}
\newtheorem{proposition}[theorem]{Proposition}
\theoremstyle{definition}
\theoremstyle{remark}
\newtheorem*{remark}{Remark}
\newcommand{\Ind}{{\rm Ind}}
\newcommand{\bfsig}{{\boldsymbol{\sigma}}}
\newcommand{\Gal}{\mathop{\textnormal{Gal}}}
\renewcommand{\phi}{\varphi}
\newcommand{\bsK}{{[\bfsig]_K}}
\newcommand{\bsKone}{{[\bfsig]_{K_1}}}
\begin{document}

\title{Random Galois extensions of Hilbertian fields}

\author{Lior Bary-Soroker \and Arno Fehm}

\begin{abstract}
Let $L$ be a Galois extension of a countable Hilbertian field $K$.
Although $L$ need not be Hilbertian, we prove that an abundance of
large Galois subextensions of $L/K$ are.
\end{abstract}

\maketitle

\section{Introduction}
\noindent
Hilbert's irreducibility theorem states that if $K$ is a number field and $f\in K[X,Y]$ is an irreducible polynomial
that is monic and separable in $Y$, then there exist infinitely many $a\in K$ such that $f(a,Y)\in K[Y]$ is irreducible.
Fields $K$ with this property are consequently called {\bf Hilbertian}, cf.~\cite{FriedJarden}, \cite{Lang}, \cite{Serre}.

Let $K$ be a field with a separable closure $K_s$, let $e\geq 1$, and write $\Gal(K)=\Gal(K_s/K)$ for the absolute Galois group of $K$. 
For an $e$-tuple $\bfsig = (\sigma_1, \ldots, \sigma_e) \in \Gal(K)^e$ we denote by 
\[
 [\bfsig]_K = \left< \sigma_\nu^\tau \mid \nu=1,\ldots,e\ \mbox{and} \ \tau\in \Gal(K)\right>
\]
the closed normal subgroup of $\Gal(K)$ that is generated by $\bfsig$.
For an algebraic extension $L/K$ we let
\[
L\bsK = \{a\in L \mid a^\tau = a, \ \forall \tau\in [\bfsig]_K\}
\]
be the maximal Galois subextension of $L/K$ that is fixed by each $\sigma_\nu$, $\nu=1,\ldots, e$. We note that the group $\bsK$, and hence the field $L\bsK$, depends on the base field $K$. 

Since $\Gal(K)^e$ is profinite, hence compact, it is equipped with a probability Haar measure. 
In \cite{Jarden} Jarden proves that if $K$ is countable and Hilbertian, then $K_s\bsK$ is Hilbertian for almost all $\bfsig\in \Gal(K)^e$. 
This provides a variety of large Hilbertian Galois extensions of $K$. 

Other fields of this type that were studied intensively are the fields $K_{{\rm tot},S}{\bsK}$, where $K$ is a number field, $S$ is a finite set of primes of $K$, and $K_{{\rm tot},S}$ is the 
{\bf field of totally $S$-adic numbers} over $K$ -- the maximal Galois extension of $K$ in which all primes in $S$ totally split; see for example \cite{HJPe} and the references therein for recent developments. Although the absolute Galois group of $K_{{\rm tot},S}{\bsK}$ was completely determined in {\it loc.\ cit.} (for almost all $\bfsig$), the question whether $K_{{\rm tot},S}{\bsK}$ is Hilbertian or not remained open. Note that if $\bfsig = (1,\ldots, 1)$, then  $K_{{\rm tot},S}{\bsK} =K_{{\rm tot},S}$ is not Hilbertian, cf.~\cite{BarySorokerFehm}.

The main objective of this study is to prove the following general result, which, in particular, generalizes Jarden's result and resolves the above question. 

\begin{theorem}\label{thm:main}
Let $K$ be a countable Hilbertian field, let $e\geq 1$, and let $L/K$ be a Galois extension. 
Then $L\bsK$ is Hilbertian for almost all $\bfsig \in \Gal(K)^e$. 
\end{theorem}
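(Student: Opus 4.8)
The plan is to combine two standard facts about Hilbertianity: that every finite separable extension of a Hilbertian field is again Hilbertian, and Haran's diamond theorem. Recall that the latter asserts: if $K$ is Hilbertian and $M_1,M_2$ are Galois extensions of $K$, then every field $M$ with $K\subseteq M\subseteq M_1M_2$, $M\not\subseteq M_1$ and $M\not\subseteq M_2$ is Hilbertian. Specializing to $M=M_1M_2$ gives the form I will use: \emph{if $K$ is Hilbertian, $M/K$ is Galois, and $\Gal(M/K)$ has two nontrivial closed normal subgroups $N_1,N_2$ with $N_1\cap N_2=1$, then $M$ is Hilbertian}---put $M_i=M^{N_i}$, so that $M_1M_2=M^{N_1\cap N_2}=M$ while $M_i\subsetneq M$ forces $M\not\subseteq M_i$. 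Let me say that a profinite group \emph{admits a splitting} if it possesses two nontrivial closed normal subgroups intersecting trivially; thus $M$ is Hilbertian as soon as $\Gal(M/K)$ admits a splitting.

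First I would pass from $\Gal(K)$ to $\bar G:=\Gal(L/K)$. The restriction map $\Gal(K)\to\bar G$ is a continuous surjection carrying the Haar measure to the Haar measure of $\bar G$; it sends $[\bfsig]_K$ to the normal closure $[\bar\bfsig]_{\bar G}$ of the images $\bar\bfsig=(\bar\sigma_1,\dots,\bar\sigma_e)$; and since $[\bfsig]_K$ acts on $L$ through this image, $L\bsK=L^{[\bar\bfsig]_{\bar G}}$, so that $\Gal(L\bsK/K)=\bar G/[\bar\bfsig]_{\bar G}$. Everything is therefore governed by the profinite group $\bar G$ alone (which is metrizable because $K$ is countable), and the theorem reduces to the following statement about random normal closures: \emph{for almost all $\bar\bfsig\in\bar G^{\,e}$, the quotient $\bar G/[\bar\bfsig]_{\bar G}$ is either finite or admits a splitting}. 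Granting this, the theorem follows immediately: when the quotient is finite, $L\bsK/K$ is a finite separable extension of the Hilbertian field $K$, hence $L\bsK$ is Hilbertian; and when it is infinite and admits a splitting, the reformulated diamond theorem applies to $\Gal(L\bsK/K)$.

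To establish the displayed measure statement I would work through the finite quotients $Q=\bar G/U$, over the open normal subgroups $U\trianglelefteq\bar G$. In each $Q$ the image of $[\bar\bfsig]_{\bar G}$ is the normal closure $H_U$ of the images of the $\bar\sigma_\nu$---a subgroup generated by $e$ Haar-random elements together with their conjugates. The aim is a dichotomy holding outside a null set of $\bar\bfsig$: either the indices $[Q:H_U]$ stay bounded as $U$ shrinks, so that $[\bar\bfsig]_{\bar G}$ is open and $\bar G/[\bar\bfsig]_{\bar G}$ is finite; or the quotients $Q/H_U$ eventually carry two nontrivial normal subgroups with trivial intersection in a manner that persists to the inverse limit, producing a splitting of $\bar G/[\bar\bfsig]_{\bar G}$. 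The probabilistic input at each finite level---bounding the chance that the normal closure of random elements is large, or conversely that it leaves a quotient admitting no splitting---is modelled on Jarden's computation that $K_s\bsK$ is Hilbertian; indeed, specializing to $L=K_s$, where $\bar G=\Gal(K)$, recovers his theorem, which explains the sense in which the present result generalizes it.

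The main obstacle is precisely this last promotion from finite levels to the limit. Admitting a splitting is \emph{not} a finite-level condition: two normal subgroups of a finite quotient meeting trivially may acquire a nontrivial intersection further down the inverse system, so one cannot simply read off the splitting of $\bar G/[\bar\bfsig]_{\bar G}$ from a single $Q/H_U$. The delicate point is therefore to control the behaviour uniformly over the infinitely many $U$. I expect this to require a Borel--Cantelli argument isolating, level by level, the measure-zero event that the random normal closure conspires to leave an infinite quotient admitting no splitting---the guiding example being a $\ZZ_p$-quotient, whose nested closed subgroups forbid a splitting, and which one must show arises only on a null set (typically because leaving such a quotient infinite would force the $\bar\sigma_\nu$ into a lower-dimensional position of probability zero)---and in summing these exceptional events to a null set.
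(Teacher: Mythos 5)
Your reduction to the group $\bar G=\Gal(L/K)$ and your reformulation of Haran's diamond theorem (two nontrivial closed normal subgroups with trivial intersection force Hilbertianity of the Galois closure over a Hilbertian base) are both correct. The fatal problem is that the measure-theoretic dichotomy you then set out to prove --- \emph{for almost all $\bar\bfsig$ the quotient $\bar G/[\bar\bfsig]_{\bar G}$ is finite or admits a splitting} --- is false, so no amount of work on the ``promotion to the limit'' can complete the argument. Concretely, take $e=1$, let $K$ be an imaginary quadratic field (countable, Hilbertian), and let $L/K$ be a Galois extension with $\Gal(L/K)\cong\ZZ_p\times\ZZ_p$ (such $L$ exists since the compositum of the $\ZZ_p$-extensions of $K$ has $\ZZ_p$-rank at least $r_2+1=2$). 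With probability $1-p^{-2}$ the image $\bar\sigma$ of a Haar-random $\sigma$ lies outside $p(\ZZ_p\times\ZZ_p)$; then $\ZZ_p\bar\sigma$ is a rank-one direct summand, so $\Gal(L[\sigma]_K/K)\cong(\ZZ_p\times\ZZ_p)/\ZZ_p\bar\sigma\cong\ZZ_p$, which is infinite and admits no splitting: its nontrivial closed subgroups are exactly the $p^k\ZZ_p$, and any two of them meet nontrivially. Thus on a set of \emph{positive} measure neither horn of your dichotomy holds, contradicting your expectation that such configurations (you even name $\ZZ_p$-quotients as the guiding obstruction) occur only on a null set. The same happens for every $e$, using $\Gal(L/K)\cong\ZZ_p^{e+1}$: with positive probability the $e$ images are $\mathbb{F}_p$-independent modulo $p$, their $\ZZ_p$-span is a rank-$e$ direct summand, and the quotient is again $\ZZ_p$.

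What is missing is a third case, and it is precisely how the paper closes this hole: quotients like $\ZZ_p$ correspond to \emph{small} Galois extensions (only finitely many subextensions of each finite degree), and small Galois extensions of Hilbertian fields are Hilbertian by a theorem of Fried--Jarden (Proposition 16.11.1 there) --- a tool independent of the diamond theorem. The paper's actual case division is: either some finite Galois subextension $K_1$ of $L/K$ admits an infinite pairwise linearly disjoint family of finite subextensions (Condition $\mathcal{L}_K$), in which case Hilbertianity of $L\bsK$ is proved for almost all $\bfsig$ not by quoting the diamond theorem as a black box but by Haran's twisted wreath-product machinery combined with Hilbertian specialization, a Borel--Cantelli argument, and Haran's irreducibility criterion; or no such $K_1$ exists, and then a Ramsey-theoretic argument shows $L\bsK/K$ is small for every relevant $\bfsig$. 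Your proposal would need this smallness case added, and even in the complementary case the probabilistic core of your argument is entirely deferred (``I expect this to require\dots''), so as it stands no part of the measure statement is actually proved.
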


Jarden's proof of the case $L=K_s$ is based on Roquette's theorem \cite[Corollary~27.3.3]{FriedJarden} and Melnikov's theorem \cite[Theorem 25.7.5]{FriedJarden}: Jarden proves that for almost all $\bfsig$, the countable field $K_s\bsK$ is pseudo algebraically closed. Therefore, by Roquette, $K_s\bsK$ is Hilbertian if $[\bfsig]_K$ is a free profinite group of infinite rank. Then Melnikov's theorem is applied to reduce the proof of the freeness of $[\bfsig]_K$ to realizing simple groups as quotients of $[\bfsig]_K$. 

However, $L\bsK$ is not pseudo algebraically closed for most $L$ (e.g.\ for $L=K_{{\rm tot},S}$, whenever $S\neq \emptyset$). Thus, it seems that Jarden's proof cannot be extended to such fields $L$. 
Our proof utilizes Haran's twisted wreath products approach \cite{HaranDiamond}.
We can apply this approach whenever $L/K$ has many linearly disjoint subextensions (in the sense of Condition~\ref{condL} below).
A combinatorial argument then shows that in the remaining case, $L\bsK$ is a small extension of $K$, and therefore also Hilbertian.

\section{Small extensions and linearly disjoint families}
\noindent
Let $K\subseteq K_1\subseteq L$ be a tower of fields.
We say that $L/K_1$ satisfies {\bf Condition~\ref{condL}} if the following holds:
{\renewcommand{\theequation}{$\mathcal{L}_{K}$}
\begin{equation}\label{condL}
\begin{minipage}{11cm}
{\it There exists an infinite pairwise linearly disjoint family of finite proper subextensions of $L/K_1$ of the same degree and Galois over $K$.}
\end{minipage}
\end{equation}

If a Galois extension satisfies Condition~\ref{condL}, then one can find linearly disjoint families of subextensions with additional properties:}

\begin{lemma}\label{lem:lindisj}
Let $(M_i)_i$ be a pairwise linearly disjoint family of Galois extensions of $K$ and let $E/K$ be a finite Galois extension. Then $M_i$ is linearly disjoint from $E$ over $K$ for all but finitely many $i$. 
\end{lemma}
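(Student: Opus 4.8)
The plan is to reduce the assertion to a statement about field intersections and then close it with a short finiteness-plus-injectivity argument. First I would recall the standard criterion for linear disjointness in the presence of a Galois factor: if $E/K$ is a finite Galois extension and $M/K$ is any extension inside a common overfield, then $E$ and $M$ are linearly disjoint over $K$ if and only if $E\cap M=K$ (cf.\ \cite{FriedJarden}). Applying this in both directions, the hypothesis that $(M_i)_i$ is pairwise linearly disjoint over $K$ yields $M_i\cap M_j=K$ for all $i\neq j$ (linear disjointness always forces the intersection to be the base field), while the conclusion to be proved becomes simply: $M_i\cap E=K$ for all but finitely many $i$.

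Since $E/K$ is finite and Galois, the Galois correspondence shows that $E/K$ has only finitely many intermediate fields. Each intersection $M_i\cap E$ is a field between $K$ and $E$, so the assignment $i\mapsto M_i\cap E$ takes values in this finite set. The heart of the matter is that, restricted to the set of ``bad'' indices $B=\{i : M_i\cap E\neq K\}$, this assignment is injective. Indeed, if $M_i\cap E=M_j\cap E=F$ for some $i\neq j$, then $F\subseteq M_i$ and $F\subseteq M_j$, hence $F\subseteq M_i\cap M_j=K$; thus $F=K$, contradicting $i,j\in B$.

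It follows that $B$ injects into the finite set of proper intermediate fields of $E/K$, so $B$ is finite, which is exactly the claim. I do not anticipate a genuine obstacle here: the only point that requires care is the correct use of the Galois hypothesis, which is what allows one to pass from the numerical condition $M_i\cap E=K$ back to actual linear disjointness (the reverse implication holds for arbitrary extensions, but this direction does not). Everything else rests on the finiteness of the lattice of intermediate fields of a finite Galois extension together with the elementary injectivity step above.
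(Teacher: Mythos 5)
Your proof is correct and follows essentially the same route as the paper: the paper's one-line argument ("clear since $E/K$ has only finitely many subextensions," citing an external lemma) is exactly the finiteness-of-intermediate-fields plus injectivity argument you spell out, with the pairwise disjointness $M_i\cap M_j=K$ forcing the map $i\mapsto M_i\cap E$ to be injective on the bad set. Your write-up just makes explicit the details the paper delegates to the citation.
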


\begin{proof}
This is clear since $E/K$ has only finitely many subextensions, cf.~\cite[Lemma 2.5]{BarySorokerCharacterization} and its proof.
\end{proof}

\begin{lemma}\label{lem:Lplus}
Let $K\subseteq K_1\subseteq L$ be fields such that $L/K$ is Galois, $K_1/K$ is finite and $L/K_1$ satisfies Condition~\ref{condL}.
Let $M_0/K_1$ be a finite extension, and let $d\geq 1$. Then there exist
a finite group $G$ with $|G|\geq d$ and an infinite family $(M_i)_{i>0}$ of subextensions of $L/K_1$ which are Galois over $K$ such that $\Gal(M_i/K_1)\cong G$ for every $i>0$ and the family $(M_i)_{i\geq 0}$ is linearly disjoint over $K_1$.
\end{lemma}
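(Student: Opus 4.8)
The plan is to first use Condition~\ref{condL} to pin down a single Galois group over $K_1$, then extract from the given family a subfamily that is linearly disjoint as a whole (not merely pairwise) and also disjoint from $M_0$, and finally enlarge the group past $d$ by passing to composita. By Condition~\ref{condL} applied to $L/K_1$, fix an infinite pairwise linearly disjoint family $(N_j)_j$ of proper subextensions of $L/K_1$, all of the same degree $n:=[N_j:K_1]$ and each Galois over $K$; since $K\subseteq K_1\subseteq N_j$ and $N_j/K$ is Galois, each $N_j/K_1$ is Galois as well, and as the family is infinite and pairwise linearly disjoint the $N_j$ are pairwise distinct, forcing $n\geq 2$. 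Because there are only finitely many isomorphism types of groups of order $n$, the pigeonhole principle yields an infinite subfamily, which I again denote $(N_j)_j$, with $\Gal(N_j/K_1)\cong H$ for one fixed group $H$ of order $n$.

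Next I would extract a subfamily that, together with $M_0$, is linearly disjoint over $K_1$. I build it inductively: having chosen finitely many of the $N_j$ whose compositum with $M_0$ is some finite extension $E/K_1$, I pass to the Galois closure $\hat E$ of $E/K$, a finite Galois extension of $K$, and invoke Lemma~\ref{lem:lindisj} to discard the finitely many $N_j$ that fail to be linearly disjoint from $\hat E$ over $K$. For any surviving $N_j$ one has $N_j\cap\hat E=K$, whence $K_1\subseteq N_j\cap E\subseteq N_j\cap\hat E=K\subseteq K_1$, so $N_j\cap E=K_1$; as $N_j/K_1$ is Galois, this means $N_j$ is linearly disjoint from $E$ over $K_1$, and by the tower property of linear disjointness the enlarged family stays linearly disjoint over $K_1$. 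Iterating produces an infinite family $(P_t)_{t\geq 1}$ among the $N_j$ such that $\{M_0\}\cup\{P_t:t\geq 1\}$ is linearly disjoint over $K_1$, each $P_t/K$ is Galois, and $\Gal(P_t/K_1)\cong H$.

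Finally I would enlarge the group, using the fact that grouping a linearly disjoint family of Galois extensions into composita again yields a linearly disjoint family whose Galois groups are the corresponding direct products. Choosing $m$ with $n^m\geq d$, I partition the $P_t$ into consecutive blocks of size $m$ and let $M_i$ be the compositum of the $i$-th block. Then each $M_i\subseteq L$ is Galois over $K$ as a compositum of $K$-Galois extensions, $\Gal(M_i/K_1)\cong H^m=:G$ has order $n^m\geq d$, and $(M_i)_{i\geq 0}$ is linearly disjoint over $K_1$, which is exactly what is required.

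The step I expect to be the main obstacle is the passage from pairwise to full linear disjointness of the family together with the extension $M_0$, which need not be Galois over $K$: pairwise disjointness is genuinely insufficient for a family (as quadratic subfields already show), so the real work is the inductive construction through the Galois closure $\hat E$ and the conversion of ``linearly disjoint over $K$'' into ``linearly disjoint over $K_1$'' via the intersection computation above. Once that is in place, enlarging $G$ beyond $d$ by forming blocks of composita is a purely formal consequence.
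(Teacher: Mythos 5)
Your overall strategy coincides with the paper's: fix the degree $n>1$ and a common Galois group by pigeonhole, iterate Lemma~\ref{lem:lindisj} to upgrade pairwise linear disjointness to linear disjointness of the whole family together with $M_0$, and then take composita of blocks to push the group order past $d$. However, the step you yourself single out as the main obstacle is carried out incorrectly: you apply Lemma~\ref{lem:lindisj} over the wrong base field. In your inductive step you form the Galois closure $\hat{E}$ of $E/K$ and want to discard the finitely many $N_j$ that fail to be linearly disjoint from $\hat{E}$ over $K$. This cannot work once $K_1\neq K$: every $N_j$ contains $K_1$, and so does $\hat{E}\supseteq E\supseteq M_0\supseteq K_1$, hence $N_j\cap\hat{E}\supseteq K_1\supsetneq K$, and linearly disjoint extensions always intersect in the base field, so \emph{no} $N_j$ is linearly disjoint from $\hat{E}$ over $K$ --- there are no surviving $N_j$ at all. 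For the same reason the hypothesis of Lemma~\ref{lem:lindisj} fails in your application: the $N_j$ are pairwise linearly disjoint over $K_1$, not over $K$, since their pairwise intersections contain $K_1$. Your own chain of inclusions $K_1\subseteq N_j\cap E\subseteq N_j\cap\hat{E}=K\subseteq K_1$ forces $K_1=K$; the argument implicitly assumes this, while the lemma is needed (in Lemma~\ref{lem:Lup} and in Lemma~\ref{lem:main}) precisely for $K_1$ strictly larger than $K$.

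The repair is what the paper's proof does: apply Lemma~\ref{lem:lindisj} with $K_1$ as the base field --- the ``$K$'' in that lemma is a dummy variable. Each $N_j$ is Galois over $K_1$ (being Galois over $K$ and containing $K_1$) and the family is pairwise linearly disjoint over $K_1$; one then works with a finite Galois extension of $K_1$ containing $E$, or argues directly as in the proof of Lemma~\ref{lem:lindisj}: each intersection $N_j\cap E$ is separable over $K_1$, hence lies among the finitely many subextensions of the maximal separable subextension of $E/K_1$, and if two intersections coincide, say $N_i\cap E=N_j\cap E$ with $i\neq j$, then this common field is contained in $N_i\cap N_j=K_1$. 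Thus all but finitely many $N_j$ satisfy $N_j\cap E=K_1$ and hence, $N_j/K_1$ being Galois, are linearly disjoint from $E$ over $K_1$, so your induction proceeds. With this correction the rest of your argument --- pigeonholing the group $H$ first and taking blocks of $m$ composita with $n^m\geq d$, so that $G\cong H^m$ --- is correct and is essentially the paper's proof; the paper merely takes blocks of size $d$ first and pigeonholes the finitely many groups of order $n^d$ afterwards, a difference of no consequence.
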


\begin{proof}
By assumption there exists an infinite pairwise linearly disjoint family $(N_i)_{i>0}$ of subextensions of $L/K_1$ which are Galois over $K$ and of the same degree $n>1$ over $K_1$.
Iterating Lemma \ref{lem:lindisj} gives an infinite subfamily $(N_i')_{i>0}$ of $(N_i)_{i>0}$ such that the family $M_0,(N_i')_{i>0}$
is linearly disjoint over $K_1$.
If we let 
$$
M_i'=N_{id}'N_{id+1}'\cdots N_{id+d-1}'
$$ 
be the compositum, then the family
$M_0,(M_i')_{i>0}$ is linearly disjoint over $K_1$, and $[M_i':K_1]=n^d>d$ for every $i$.
Since up to isomorphism there are only finitely many finite groups of order $n^d$, 
there is a finite group $G$ of order $n^d$ and an infinite subfamily $(M_i)_{i>0}$ of $(M_i')_{i>0}$ such that $\Gal(M_i/K_1)\cong G$ for all $i>0$.
\end{proof}

\begin{lemma}\label{lem:Lup}
Let $K\subseteq K_1\subseteq K_2\subseteq L$ be fields such that $L/K$ is Galois, $K_2/K$ is finite Galois and $L/K_1$ satisfies Condition~\ref{condL}.
Then also $L/K_2$ satisfies Condition~\ref{condL}.
\end{lemma}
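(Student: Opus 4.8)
The plan is to produce the required family of subextensions of $L/K_2$ as composita $M_i=N_iK_2$, where $(N_i)_i$ is a suitably refined family coming from Condition~\ref{condL} for $L/K_1$. First I would invoke Condition~\ref{condL} to fix an infinite family $(N_i)_i$ of finite proper subextensions of $L/K_1$, pairwise linearly disjoint over $K_1$, all of the same degree $n>1$ and Galois over $K$; since $K\subseteq K_1\subseteq N_i$, each $N_i$ is then also Galois over $K_1$. Note that the mere existence of such a family forces $L/K_1$, and hence $L/K_2$, to be of infinite degree, so that every finite subextension of $L/K$ is automatically a \emph{proper} subextension of $L/K_2$. Because $K_2/K$ is finite Galois and $K\subseteq K_1\subseteq K_2$, the extension $K_2/K_1$ is finite Galois as well; this is what will let me bring $K_2$ into the disjointness bookkeeping.

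The heart of the matter is that the naive choice $M_i=N_iK_2$ need not give a family that is pairwise linearly disjoint over $K_2$: pairwise linear disjointness of the $N_i$ over $K_1$ together with $N_i\cap K_2=K_1$ is \emph{not} enough, as the example $N_i=\QQ(\sqrt2)$, $N_j=\QQ(\sqrt3)$, $K_2=\QQ(\sqrt6)$ over $K_1=K=\QQ$ shows, where $N_iK_2=N_jK_2=\QQ(\sqrt2,\sqrt3)$. To rule this out I would first refine $(N_i)_i$ to an infinite subfamily for which $K_2$ together with all the chosen $N_i$ is \emph{jointly} linearly disjoint over $K_1$. This can be arranged greedily: maintain a finite field $C$, Galois over $K_1$, initialized at $C=K_2$; at each stage $C$ is a compositum of $K_2$ with finitely many of the $N_i$, hence finite Galois over $K_1$, so by Lemma~\ref{lem:lindisj} (applied with $K_1$ in place of $K$ and $E=C$) all but finitely many remaining $N_i$ are linearly disjoint from $C$ over $K_1$; pick one, adjoin it to $C$, and repeat. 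The resulting subfamily, relabelled $(N_i)_i$, has the property that $K_2,N_1,N_2,\dots$ are jointly linearly disjoint over $K_1$; in particular $N_iN_j\cap K_2=K_1$ for all $i\neq j$, and each $N_i\cap K_2=K_1$.

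Finally I would set $M_i=N_iK_2$ and verify the four requirements of Condition~\ref{condL} for $L/K_2$. Each $M_i$ is Galois over $K$ as a compositum of the Galois extensions $N_i/K$ and $K_2/K$, and is finite over $K$, hence a proper subextension of $L/K_2$ by the infinitude of $L/K_2$. Since $N_i\cap K_2=K_1$, one has $[M_i:K_2]=[N_i:K_1]=n$, so all $M_i$ share the same degree $n>1$ over $K_2$. For pairwise linear disjointness over $K_2$, fix $i\neq j$ and work inside $\Omega=N_iN_jK_2$: the joint disjointness of $N_i,N_j,K_2$ over $K_1$ gives $\Gal(\Omega/K_1)\cong\Gal(N_i/K_1)\times\Gal(N_j/K_1)\times\Gal(K_2/K_1)$, under which $\Gal(\Omega/N_iK_2)=1\times\Gal(N_j/K_1)\times 1$ and $\Gal(\Omega/N_jK_2)=\Gal(N_i/K_1)\times 1\times 1$; the subgroup they generate is $\Gal(N_i/K_1)\times\Gal(N_j/K_1)\times 1=\Gal(\Omega/K_2)$, so $(N_iK_2)\cap(N_jK_2)=K_2$, which for extensions Galois over $K_2$ is exactly linear disjointness. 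Thus $(M_i)_i$ witnesses Condition~\ref{condL} for $L/K_2$.

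The step I expect to be the main obstacle is precisely the refinement in the second paragraph: everything hinges on upgrading the given \emph{pairwise} linear disjointness over $K_1$ to \emph{joint} linear disjointness of the chosen family together with $K_2$, since only then does passing to composita with $K_2$ preserve pairwise disjointness over $K_2$. The greedy construction via Lemma~\ref{lem:lindisj} is what makes this upgrade possible, and the fact that $K_2/K_1$ is Galois is essential both for that application and for the final Galois-correspondence computation.
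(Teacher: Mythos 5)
Your proof is correct and follows essentially the same route as the paper: the paper applies Lemma~\ref{lem:Lplus} with $M_0=K_2$ — whose proof is exactly your greedy iteration of Lemma~\ref{lem:lindisj} — to get a family jointly linearly disjoint from $K_2$ over $K_1$, and then passes to the composita $M_iK_2$. The only cosmetic difference is that the paper cites \cite[Lemma~2.5.11]{FriedJarden} for the linear disjointness of the composita over $K_2$, where you verify it by a direct Galois-correspondence computation.
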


\begin{proof}
By Lemma \ref{lem:Lplus}, applied to $M_0=K_2$, there exists an infinite family $(M_i)_{i>0}$ of subextensions of $L/K_1$ which are Galois over $K$, of the same degree $n>1$ over $K_1$
and such that the family $K_2,(M_i)_{i>0}$ is linearly disjoint over $K_1$.
Let $M_i'=M_iK_2$. Then $[M_i':K_2]=[M_i:K_1]=n$, $M_i'/K$ is Galois,
and the family $(M_i')_{i>0}$ is linearly disjoint over $K_2$, cf.~\cite[Lemma~2.5.11]{FriedJarden}.
\end{proof}

Recall that a Galois extension $L/K$ is {\bf small} if for every $n\geq 1$ 
there exist only finitely many intermediate fields $K\subseteq M\subseteq L$ with $[M:K]=n$.
Small extensions are related to Condition~\ref{condL} by Proposition~\ref{lem:small} below,
for which we give a combinatorial argument using Ramsey's theorem,
which we recall for the reader's convenience:

\begin{proposition}[{\cite[Theorem 9.1]{Jech}}]\label{prop:Ramsey}
Let $X$ be a countably infinite set and $n,k\in\mathbb{N}$. 
For every partition $X^{[n]}=\bigcup_{i=1}^k Y_i$ of the set of subsets of $X$ of cardinality $n$ into $k$ pieces
there exists an infinite subset $Y\subseteq X$ such that $Y^{[n]}\subseteq Y_i$ for some $i$.
\end{proposition}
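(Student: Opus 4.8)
The plan is to prove this by induction on $n$, the case $n=1$ being immediate from the pigeonhole principle: here $X^{[1]}$ is the set of singletons of $X$, and partitioning it into $k$ classes $Y_1,\dots,Y_k$ amounts to partitioning the infinite set $X$ into $k$ pieces, so one piece is infinite and its union of elements is the desired homogeneous $Y$.

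For the inductive step I would assume the statement for $n-1$ (for every value of $k$) and deduce it for $n$. Writing the partition as a coloring $c\colon X^{[n]}\to\{1,\dots,k\}$, I would first build by recursion a sequence of distinct elements $x_0,x_1,x_2,\dots$ of $X$, a decreasing chain of infinite sets $X=X_0\supseteq X_1\supseteq X_2\supseteq\cdots$, and colors $d_0,d_1,\dots\in\{1,\dots,k\}$, as follows. Having constructed $X_m$, choose any $x_m\in X_m$ and consider the induced coloring $S\mapsto c(\{x_m\}\cup S)$ on the $(n-1)$-subsets of $X_m\setminus\{x_m\}$. Applying the induction hypothesis to this infinite set yields an infinite $X_{m+1}\subseteq X_m\setminus\{x_m\}$ on which the induced coloring is constant, say with value $d_m$; that is, $c(\{x_m\}\cup S)=d_m$ for every $(n-1)$-subset $S\subseteq X_{m+1}$.

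The key point to verify is that for any indices $i_0<i_1<\cdots<i_{n-1}$ one has $c(\{x_{i_0},\dots,x_{i_{n-1}}\})=d_{i_0}$: indeed $x_{i_1},\dots,x_{i_{n-1}}$ are all selected after stage $i_0$, hence lie in $X_{i_0+1}$, so $\{x_{i_1},\dots,x_{i_{n-1}}\}$ is an $(n-1)$-subset of $X_{i_0+1}$ and the displayed property with $m=i_0$ applies. Thus the color of any $n$-subset of $\{x_0,x_1,\dots\}$ depends only on its least element. Since there are only $k$ colors but infinitely many indices, a second pigeonhole argument produces a color $d$ with $d_i=d$ for all $i$ in an infinite set $I\subseteq\mathbb{N}$. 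Setting $Y=\{x_i\mid i\in I\}$, every $n$-subset of $Y$ has its least element indexed in $I$, so by the key point it receives color $d$; hence $Y^{[n]}\subseteq Y_d$, as required.

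The main obstacle is the recursive construction together with the bookkeeping that forces the color of an $n$-set to be a function of its least element alone; once that invariant is secured, both the base case and the concluding step reduce to routine pigeonhole arguments.
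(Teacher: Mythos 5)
Your proof is correct: it is the classical induction-on-$n$ argument for the infinite Ramsey theorem, with the recursive construction of nested infinite sets forcing the color of an $n$-set to depend only on its least element, followed by two pigeonhole steps. The paper itself gives no proof of this statement --- it is quoted directly from the cited reference (Jech, Theorem 9.1) --- and your argument is essentially the standard one found there, so there is nothing to reconcile.
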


\begin{proposition}\label{lem:small}
Let $L/K$ be a Galois extension.
If there exists no finite Galois subextension $K_1$ of $L/K$ such that
$L/K_1$ satisfies Condition~\ref{condL}, then $L/K$ is small.
\end{proposition}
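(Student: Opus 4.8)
The plan is to prove the contrapositive: assuming $L/K$ is not small, I will produce a finite Galois subextension $K_1$ of $L/K$ for which $L/K_1$ satisfies Condition~\ref{condL}. If $[L:K]<\infty$ then $L/K$ is trivially small, so I may assume $[L:K]=\infty$. Non-smallness gives an $n\geq 2$ with infinitely many intermediate fields of degree $n$ over $K$. Passing to Galois closures inside $L$ (which lie in $L$ since $L/K$ is Galois) yields finite Galois subextensions of degree at most $n!$; since each such closure contains only finitely many subfields, infinitely many distinct closures occur, and by pigeonhole infinitely many of them share a single degree $m$ with $2\le m\le n!$. Writing $G=\Gal(L/K)$ and translating through the Galois correspondence, I obtain infinitely many distinct open normal subgroups $H_i\trianglelefteq G$ of index $m$. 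The goal then becomes purely group-theoretic: find an open normal subgroup $H^\ast\trianglelefteq G$ and an infinite subfamily with $H_i\subsetneq H^\ast$ and $H_iH_j=H^\ast$ for all $i\neq j$. Indeed, $K_1=L^{H^\ast}$ is then finite Galois over $K$, and the fields $L^{H_i}$ form an infinite family of finite proper subextensions of $L/K_1$, of the common degree $[H^\ast:H_i]$, Galois over $K$, and pairwise linearly disjoint over $K_1$ (the equality $H_iH_j=H^\ast$ is exactly linear disjointness over $K_1$), which is Condition~\ref{condL}.

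I would prove the group-theoretic statement by induction on the common index $[H^\ast:H_i]$, in the flexible form: given an open normal $H^\ast\trianglelefteq G$ and infinitely many distinct open $H_i\trianglelefteq G$ contained in $H^\ast$ of common index, there are a $G$-normal open subgroup and an infinite subfamily witnessing the conclusion. The inductive step uses Ramsey's theorem (Proposition~\ref{prop:Ramsey}) applied to pairs: color each $\{i,j\}$ by the index $[H^\ast:H_iH_j]$, a value among the finitely many divisors of $[H^\ast:H_i]$, and extract an infinite subfamily on which this index is a constant $d_0$. If $d_0=1$, then $H_iH_j=H^\ast$ for all pairs and we are done. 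If $d_0>1$, fix one index $a$; the subgroups $H_aH_j$ all lie between $H_a$ and $H^\ast$, and there are only finitely many such (they correspond to subgroups of the finite group $H^\ast/H_a$), so by pigeonhole an infinite subfamily has $H_aH_j=H^{\ast\ast}$ for a single $G$-normal subgroup $H^{\ast\ast}$ (normal in $G$ as a product of two $G$-normal subgroups), of index $d_0$ in $H^\ast$. All these $H_j$ then lie in $H^{\ast\ast}$ with strictly smaller common index $[H^\ast:H_i]/d_0$, so the induction hypothesis applies inside $H^{\ast\ast}$ and completes the step.

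The main obstacle is precisely the gap bridged in this inductive step: Ramsey's theorem alone only makes the pairwise product index $[H^\ast:H_iH_j]$ constant, which is strictly weaker than producing a single common product subgroup. Already three codimension-two subspaces of $\mathbb{F}_2^3$ have pairwise sums equal to three distinct hyperplanes, so no global common product need exist at the top level, and a direct one-shot application of Ramsey cannot succeed. The device that overcomes this is the pigeonhole stabilization of $H_aH_j$ around a fixed $a$, which breaks the symmetry and forces the family into a proper $G$-normal subgroup $H^{\ast\ast}$; iterating, after finitely many descents the product index drops to $1$ and the family becomes pairwise linearly disjoint over the corresponding $K_1$. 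The existence of this $K_1$ contradicts the hypothesis that no finite Galois $K_1$ makes $L/K_1$ satisfy Condition~\ref{condL}, so $L/K$ must be small.
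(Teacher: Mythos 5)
Your proof is correct, but it resolves the crucial difficulty differently from the paper. Both arguments share the same skeleton: pass to the contrapositive, take Galois closures and pigeonhole to get infinitely many Galois subextensions of a common degree, apply Ramsey's theorem (Proposition~\ref{prop:Ramsey}) to pairs, and finish with the fact that Galois extensions are linearly disjoint over their intersection. The divergence is in how one upgrades Ramsey's output to a \emph{constant} pairwise intersection. The paper's device is to choose the common degree $d$ \emph{minimal} among degrees $>1$ admitting infinitely many Galois subextensions; then every pairwise intersection is a Galois subextension of degree $<d$, of which there are only finitely many by minimality, so Ramsey can be applied with the intersections themselves as the (finitely many) colors, producing the field $K_1$ in a single step with no induction. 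You instead color a pair only by the degree of its intersection (equivalently, the index $[H^\ast:H_iH_j]$), which is automatically a finite palette but — as your $\mathbb{F}_2^3$ example correctly isolates — strictly weaker, and you then repair this by the pigeonhole stabilization of $H_aH_j$ around a fixed member $a$ and descent by induction on the common index. Your descent is sound: $d_0=[H^\ast:H_iH_j]$ cannot equal the common index $c$ (that would force $H_i=H_j$), so the index drops strictly and the recursion must terminate in the case $d_0=1$. In summary, the paper's minimality trick buys brevity and a one-shot Ramsey application; your version costs an induction but needs no minimal-degree selection and cleanly isolates a purely group-theoretic statement about open normal subgroups of $\Gal(L/K)$ that stands on its own.
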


\begin{proof}
Suppose that $L/K$ is not small, so
it has infinitely many subextensions of degree $m$ over $K$, for some $m>1$.
Taking Galois closures we get that for some $1<d\leq m!$ there exists an infinite family $\mathcal{F}$ of Galois subextensions of $L/K$ of degree $d$: Indeed, only finitely many extensions of $K$ can have the same Galois closure.

Choose $d$ minimal with this property.
For any two distinct Galois subextensions of $L/K$ of degree $d$ over $K$ 
their intersection is a Galois subextension of $L/K$ of degree less than $d$ over $K$,
and by minimality of $d$ there are only finitely many of those.
Proposition \ref{prop:Ramsey} thus gives
a finite Galois subextension $K_1$ of $L/K$ and an infinite subfamily $\mathcal{F}'\subseteq\mathcal{F}$ such that for any two distinct
$M_1,M_2\in\mathcal{F}'$, $M_1\cap M_2=K_1$.
Since any two Galois extensions are linearly disjoint over their intersection, it follows that $L/K_1$ satisfies Condition~\ref{condL}.
\end{proof}

The converse of Proposition~\ref{lem:small} holds trivially.
The following fact on small extensions will be used in the proof of Theorem~\ref{thm:main}.

\begin{proposition}[{\cite[Proposition 16.11.1]{FriedJarden}}]\label{lem:smallext}
If $K$ is Hilbertian and $L/K$ is a small Galois extension, then $L$ is Hilbertian.
\end{proposition}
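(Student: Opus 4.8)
The plan is to verify the Hilbertianity of $L$ directly from the definition, using smallness to reduce an irreducibility condition over the infinite extension $L$ to one over a finite subextension, which can then be settled by the Hilbertianity of a finite subextension of $K$. By the usual reductions it suffices to fix a single polynomial $f\in L[X,Y]$ that is monic and separable in $Y$, irreducible over $L$, with $N:=\deg_Y f\geq 2$, and to produce infinitely many $a\in L$ with $f(a,Y)$ irreducible over $L$. First I would descend the base field: the finitely many coefficients of $f$ lie in a finite subextension of $L/K$, and enlarging it to its Galois closure inside $L$ (which is finite because $L/K$ is Galois) I may assume $f\in K_0[X,Y]$ for a finite Galois subextension $K_0/K$. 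As a finite extension of the Hilbertian field $K$, the field $K_0$ is Hilbertian, while $f$ remains irreducible over $L$.

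The key step is the smallness reduction. Since $L/K$ is small, so is $L/K_0$, and hence only finitely many intermediate fields $K_0\subseteq F\subseteq L$ satisfy $[F:K_0]\leq N$. Let $\hat F\subseteq L$ be a finite extension of $K_0$ that is Galois over $K_0$ and contains all of them (for instance the Galois closure over $K_0$, formed inside $L$, of their compositum). I claim that if $a\in K_0$ is such that $f(a,Y)$ is irreducible over $\hat F$, then $f(a,Y)$ is already irreducible over $L$. Indeed, writing $\beta$ for a root and $E=K_0(\beta)$, irreducibility over $K_0\subseteq\hat F$ gives $[E:K_0]=N$; and were $f(a,Y)$ reducible over $L$, then, as $L/K_0$ is Galois, $E$ and $L$ would fail to be linearly disjoint over $K_0$, so $E\cap L$ would be an intermediate field of $L/K_0$ with $1<[E\cap L:K_0]\leq N$, hence one of the finitely many fields contained in $\hat F$. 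This would exhibit a nontrivial common subfield of $E$ and $\hat F$ over $K_0$, contradicting that $f(a,Y)$ is irreducible over $\hat F$ (which forces $\hat F(\beta)=\hat F E$ to have degree $N$ over $\hat F$, i.e.\ $\hat F\cap E=K_0$).

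It then remains to produce infinitely many $a\in K_0$ with $f(a,Y)$ irreducible over the finite Galois extension $\hat F/K_0$, and here I would invoke the Hilbertianity of $K_0$. Since $\hat F/K_0$ is Galois, this amounts to requiring that $f(a,Y)$ be irreducible over $K_0$ and that $\hat F\cap K_0(\beta)=K_0$. Let $\ell$ be the field of constants of the function field $K_0(X)[Y]/(f)$. Because $\hat F\subseteq L$ and $f$ is irreducible over $L(X)$, the polynomial $f$ is irreducible over $\hat F(X)$, so $\hat F$ is linearly disjoint from $\ell$ over $K_0$ and thus $\hat F\cap\ell=K_0$; consequently every nontrivial subfield $G$ of $\hat F/K_0$ fails to lie in $\ell$, whence the set of $a\in K_0$ with $G\subseteq K_0(\beta)$ is thin. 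Removing these finitely many thin sets from the Hilbert set on which $f(a,Y)$ is irreducible over $K_0$, the Hilbertianity of $K_0$ still leaves infinitely many admissible $a$, which completes the argument. The main obstacle is precisely this last step: specializing at points of the \emph{smaller} field $K_0$ while controlling irreducibility over the \emph{larger} field $\hat F$. What makes it go through is the constant-field identity $\hat F\cap\ell=K_0$, which is guaranteed exactly by the hypothesis that $f$ is irreducible over $L$.
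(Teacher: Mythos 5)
The paper gives no proof of this proposition at all --- it is quoted verbatim from Fried--Jarden [Proposition 16.11.1] --- so the comparison here is with the standard textbook argument behind that citation, and your proposal is essentially a reconstruction of it. Your skeleton is sound: descend $f$ to a finite Galois subextension $K_0$ of $L/K$; use smallness of $L/K_0$ to capture all intermediate fields of $L/K_0$ of degree at most $N$ inside one finite Galois extension $\hat F/K_0$ contained in $L$; show that for $a\in K_0$, irreducibility of $f(a,Y)$ over $\hat F$ forces irreducibility over $L$ (otherwise $K_0(\beta)\cap L$ would be a nontrivial intermediate field of degree at most $N$, hence inside $\hat F$, contradicting $K_0(\beta)\cap\hat F=K_0$); and then produce infinitely many $a\in K_0$ with $f(a,Y)$ irreducible over $\hat F$. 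You also correctly identify the genuine difficulty: specialization points must lie in the small field $K_0$ while irreducibility is controlled over the larger field $\hat F$. That last step is exactly the standard fact that a separable Hilbert subset of a finite separable extension of a Hilbertian field $K_0$ already contains infinitely many elements of $K_0$ --- the lemma by which Fried--Jarden prove the Corollary 12.2.3 cited elsewhere in this paper --- so you could conclude by citing it instead of arguing by hand.

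One step of your hand-made argument is, as stated, not a valid general principle. You claim: if $G\not\subseteq\ell$, where $\ell$ is the constant field of the single root field $F=K_0(X)[Y]/(f)$, then $\{a\in K_0: G\subseteq K_0(\beta)\}$ is thin. This is false in general, because the constant fields of the various root fields of $f$ over $K_0(X)$ are conjugate but need not coincide: for $f=Y^3-2$ over $\QQ$ and $G=\QQ(\omega\sqrt[3]{2})$ one has $G\not\subseteq\ell=\QQ(\sqrt[3]{2})$, yet $G\subseteq\QQ(\beta)$ for a suitable root $\beta$ of every specialization, so the set in question is all of $\QQ$. The correct hypothesis is that $G$ lies in the constant field of \emph{no} root field. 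In your situation this is harmless, because you actually established something stronger than $\hat F\cap\ell=K_0$: since all roots $\beta_1,\dots,\beta_N$ of $f$ have the same minimal polynomial $f$ over $\hat F(X)$, irreducibility of $f$ over $\hat F(X)$ makes $\hat F(X)$ linearly disjoint from \emph{every} root field $K_0(X)(\beta_i)$ over $K_0(X)$, hence no nontrivial $G\subseteq\hat F$ lies in any of their constant fields. With the claim restated in that form (and with the standard, but citation-worthy, fact that over a Hilbertian field a separable Hilbert set cannot be exhausted by finitely many separably-defined thin sets), your proof is complete and matches the cited one.
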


\section{Measure theory}
\noindent
For a profinite group $G$ we denote by $\mu_G$ the probability Haar measure on $G$. 
We will make use of the following two very basic measure theoretic facts.

\begin{lemma}\label{lem:indep}
Let $G$ be a profinite group, $H\leq G$ an open subgroup, $\Sigma_1,\dots,\Sigma_k\subseteq H$ measurable $\mu_H$-independent sets,
and $S\subseteq G$ a set of representatives of $G/H$.

Let $\Sigma_i^*=\bigcup_{g\in S}g\Sigma_i$.
Then $\Sigma_1^*,\dots,\Sigma_k^*$ are $\mu_G$-independent.
\end{lemma}

\begin{proof}
Let $n=[G:H]$. Then for any measurable $X\subseteq H$ we have $\mu_H(X)=n\mu_G(X)$.
Since $G$ is the disjoint union of the cosets $gH$, for $g\in S$,
we have that
$$
\mu_G(\Sigma_i^*)=\sum_{g\in S} \mu_G(g\Sigma_i)=n\mu_G(\Sigma_i)=\mu_H(\Sigma_i)
$$
and
\begin{eqnarray*}
\mu_G\left(\bigcap_{i=1}^k\Sigma_i^*\right)&=&\sum_{g\in S}\mu_G\left(\bigcap_{i=1}^k g\Sigma_i\right)=n\mu_G\left(\bigcap_{i=1}^k\Sigma_i\right)=\\
&=&\mu_H\left(\bigcap_{i=1}^k\Sigma_i\right)=
 \prod_{i=1}^k\mu_H\left(\Sigma_i\right)=\prod_{i=1}^k\mu_G\left(\Sigma_i^*\right),
\end{eqnarray*}
thus $\Sigma_1^*, \ldots, \Sigma_k^*$ are $\mu_G$-independent.
\end{proof}

\begin{lemma}\label{lem:measure}
Let $(\Omega,\mu)$ be a measure space. For each $i\geq 1$ let $A_i\subseteq B_i$ be measurable subsets of $\Omega$.
If $\mu(A_i)=\mu(B_i)$ for every $i\geq 1$, then $\mu(\bigcup_{i=1}^\infty A_i)=\mu(\bigcup_{i=1}^\infty B_i)$.
\end{lemma}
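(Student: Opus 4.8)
The plan is to reduce everything to the observation that each difference $B_i\setminus A_i$ is negligible. First I would set $N_i=B_i\setminus A_i$ for each $i\geq 1$; this is measurable since $A_i\subseteq B_i$ are measurable. The crucial step is to observe that $\mu(N_i)=0$: because $A_i\subseteq B_i$ and both sets have the same (finite, as in the intended application to a probability measure) measure, finite additivity gives $\mu(N_i)=\mu(B_i)-\mu(A_i)=0$.

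Next I would pass to the countable unions. Writing $A=\bigcup_{i=1}^\infty A_i$, $B=\bigcup_{i=1}^\infty B_i$, and $N=\bigcup_{i=1}^\infty N_i$, I would invoke countable subadditivity to get $\mu(N)\leq\sum_{i=1}^\infty\mu(N_i)=0$, hence $\mu(N)=0$. The inclusions $A\subseteq B\subseteq A\cup N$ hold because every element of $B$ lies in some $B_i=A_i\cup N_i\subseteq A\cup N$.

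Finally I would sandwich the measures: monotonicity gives $\mu(A)\leq\mu(B)$, while $\mu(B)\leq\mu(A\cup N)\leq\mu(A)+\mu(N)=\mu(A)$. Combining the two inequalities yields $\mu(A)=\mu(B)$, as desired.

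I expect the only genuine subtlety to be the step $\mu(N_i)=0$, which uses that $\mu(B_i)$ is finite so that $\mu(A_i)$ may be subtracted from it. In the application the ambient measure is a probability Haar measure, so finiteness is automatic and no extra hypothesis is needed; for a fully general measure space the statement should be read with this finiteness understood, since otherwise it fails (e.g.\ for $A_i$ and $B_i$ both of infinite measure with $B_i\setminus A_i$ not null).
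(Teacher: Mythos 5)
Your proof is correct and follows essentially the same route as the paper's: both rest on the inclusion $\bigl(\bigcup_i B_i\bigr)\smallsetminus\bigl(\bigcup_i A_i\bigr)\subseteq\bigcup_i(B_i\smallsetminus A_i)$ together with $\mu(B_i\smallsetminus A_i)=0$.

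One correction to your closing caveat: the lemma does \emph{not} fail for general measure spaces. If some $\mu(A_i)=\infty$, then monotonicity forces $\mu\bigl(\bigcup_i A_i\bigr)=\mu\bigl(\bigcup_i B_i\bigr)=\infty$ and the conclusion holds trivially; and if $\mu\bigl(\bigcup_i A_i\bigr)<\infty$, then every $\mu(A_i)=\mu(B_i)$ is finite and your argument applies verbatim. Your proposed counterexample (both sets of infinite measure with a non-null difference) only defeats the intermediate step $\mu(B_i\smallsetminus A_i)=0$, not the stated conclusion, since in that situation both unions have infinite measure. So no finiteness hypothesis needs to be read into the statement; the infinite case just requires the one extra line above.
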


\begin{proof}
This is clear since
$$
 \left(\bigcup_{i=1}^\infty B_i\right)\smallsetminus\left(\bigcup_{i=1}^\infty A_i\right)\subseteq\bigcup_{i=1}^\infty (B_i\smallsetminus A_i),
$$ 
and $\mu(B_i\smallsetminus A_i)=0$ for every $i\geq 1$ by assumption.
\end{proof}

\section{Twisted wreath products}
\noindent
Let $A$ and $G_1\leq G$ be finite groups together with a (right) action of $G_1$ on $A$. 
The set of $G_1$-invariant functions from $G$ to $A$,
$$
 {\rm Ind}_{G_1}^G (A) = \left\{ f\colon G\to A \;\mid\; f(\sigma\tau)=f(\sigma)^\tau,\ \forall \sigma\in G \,\forall\tau\in G_1\right\},
$$ 
forms a group under pointwise multiplication. Note that ${\rm Ind}_{G_1}^G (A) \cong A^{[G:G_1]}$. The group $G$ acts on $ {\rm Ind}_{G_1}^G (A) $ from the right
by $f^\sigma(\tau)= f(\sigma\tau)$, for all $\sigma,\tau\in G$. The \textbf{twisted wreath product} is defined to be the semidirect product
\[
 A\wr_{G_1} G = {\rm Ind}_{G_1}^G(A) \rtimes G,
\]
cf.~\cite[Definition 13.7.2]{FriedJarden}.
Let $\pi\colon {\rm Ind}_{G_1}^G (A) \to A$ be the projection given by $\pi(f) = f(1)$. 

\begin{lemma}\label{lem:product}
Let $G = G_1\times G_2$ be a direct product of finite groups, let $A$ be a finite $G_1$-group, and let $I={\rm Ind}_{G_1}^G (A)$. Assume that $|G_2| \geq  |A|$. Then there exists $\zeta\in  I$ such that for every $g_1\in G_1$,
the normal subgroup $N$ of $A\wr_{G_1} G$ generated by $\tau = (\zeta, (g_1,1))$ satisfies $\pi(N\cap I)=A$. 
\end{lemma}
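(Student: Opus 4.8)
The plan is to exploit the direct product structure $G=G_1\times G_2$, which makes the factor $\{1\}\times G_2$ commute with the subgroup $G_1\times\{1\}$ that contains the $G$-component $(g_1,1)$ of $\tau$; this is what will let a single $\zeta$ work for all $g_1$ simultaneously. First I would fix the standard identification $I\cong A^{G_2}$. Since the cosets of $G_1$ in $G$ are exactly $G_1\times\{g_2\}$ and $\{(1,g_2)\mid g_2\in G_2\}$ is a transversal, every $f\in I$ is determined by the tuple $\bigl(f(1,g_2)\bigr)_{g_2\in G_2}$, and every tuple occurs; under this identification $\pi(f)=f(1,1)$. A direct computation from $f^\sigma(\rho)=f(\sigma\rho)$ shows that for $\sigma=(h_1,h_2)$ the action sends the $g_2$-coordinate to the $h_2g_2$-coordinate, twisted by $h_1$. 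In particular, for $k\in G_2$ there is no $G_1$-twist, and $\pi\bigl(f^{(1,k)}\bigr)=f^{(1,k)}(1,1)=f(1,k)$.

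The key step is to manufacture elements of $N\cap I$ whose $\pi$-values are visibly independent of $g_1$. For this I would conjugate $\tau$ only by elements $(1,k)\in\{1\}\times G_2$. Because $(1,k)$ commutes with $(g_1,1)$, the conjugate $(1,k)\,\tau\,(1,k)^{-1}$ has the same $G$-component $(g_1,1)$ as $\tau$; hence the product
\[
\tau\cdot\bigl((1,k)\,\tau\,(1,k)^{-1}\bigr)^{-1}
\]
has trivial $G$-component and so lies in $N\cap I$. Carrying out the multiplication in $I\rtimes G$ identifies this element (up to the convention for the semidirect product) with $\zeta\cdot\bigl(\zeta^{(1,k^{-1})}\bigr)^{-1}$, and applying the coordinate description of $\pi$ gives its $\pi$-value as $\zeta(1,1)\,\zeta(1,k^{-1})^{-1}$. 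Crucially, $g_1$ has dropped out entirely.

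It remains to choose $\zeta$ so that these values generate $A$, and this is where the hypothesis $|G_2|\geq|A|$ is used. Since $[G:G_1]=|G_2|\geq|A|$, I can pick $\zeta$ whose coordinates $\bigl(\zeta(1,g_2)\bigr)_{g_2}$ surject onto $A$ while satisfying $\zeta(1,1)=1$. Then as $k$ runs over $G_2$ the values $\zeta(1,1)\,\zeta(1,k^{-1})^{-1}=\zeta(1,k^{-1})^{-1}$ run over all of $A$. As $\pi$ is a homomorphism and $N\cap I$ is a subgroup, $\pi(N\cap I)$ is a subgroup of $A$ containing this generating set, hence equals $A$; and the choice of $\zeta$ was made before $g_1$, so this holds for every $g_1\in G_1$.

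The main obstacle to anticipate is exactly this uniformity in $g_1$: conjugating $\tau$ by arbitrary elements of $G$ introduces $g_1$-dependent $G_1$-twists in the resulting coordinates, and one must nonetheless fix a single $\zeta$ good for all $g_1$ at once. Restricting the conjugations to the commuting factor $G_2$ is what removes the twist and decouples the construction from $g_1$, after which the counting condition $|G_2|\geq|A|$ provides precisely enough coordinates to surject onto $A$.
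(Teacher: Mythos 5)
Your proof is correct and follows essentially the same route as the paper: you choose $\zeta$ so that its coordinates $(\zeta(1,g_2))_{g_2\in G_2}$ enumerate $A$ (using $|G_2|\geq|A|$), and you conjugate $\tau$ only by elements of the commuting factor $\{1\}\times G_2$ to produce elements $\zeta\cdot(\zeta^{(1,k^{-1})})^{-1}\in N\cap I$ whose $\pi$-values $\zeta(1,1)\,\zeta(1,k^{-1})^{-1}$ exhaust $A$ independently of $g_1$. This is exactly the paper's computation $\tau\tau^{-h}=\zeta\zeta^{-h}$ and $\pi(\tau\tau^{-h_i})=a_i^{-1}$, just written out in the coordinate description $I\cong A^{G_2}$.
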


\begin{proof}
Let $A = \{ a_1, \ldots, a_n\}$ with $a_1=1$. By assumption, $|G_2| \geq n$, so we may choose distinct elements $h_1 , \ldots, h_n\in G_2$ with $h_1=1$. For $(g,h)\in G$ we set
\[
\zeta(g,h) = \begin{cases}
a_i^{g},& \mbox{if }h=h_i\ \mbox{ for some } i\\
1, & \mbox{otherwise.}\\

\end{cases}
\]
Then  $\zeta\in I$.
Since $G_1$ and $G_2$ commute in $G$, for any $h\in G_2$ we have 
\[
\tau \tau^{-h} = \zeta g_1(\zeta g_1)^{-h} = \zeta g_1\cdot g_1^{-1}\zeta^{-h} = \zeta\zeta^{-h} \in N\cap I.
\]
Hence,
\begin{eqnarray*}
a_{i}^{-1} &=&a_{1}a_{i}^{-1} = \zeta(1)\zeta(h_i)^{-1} =(\zeta\zeta^{-h_i})(1)\\
&=&(\tau\tau^{-h_i})(1)= \pi(\tau\tau^{-h_i}) \in \pi(N\cap I).
\end{eqnarray*}
We thus conclude that $A= \pi(N\cap I)$, as claimed.
\end{proof}

Following \cite{HaranDiamond} we say that
a tower of fields  
$$
 K\subseteq E'\subseteq  E\subseteq  N\subseteq \hat{N}
$$ 
\textbf{realizes} a twisted wreath product $A\wr_{G_1} G$ if 
$\hat{N}/K$ is a Galois extension with Galois group isomorphic to $A\wr_{G_1} G$ and the tower of fields corresponds to the subgroup series
\[
A\wr_{G_1} G \;\geq\; \Ind_{G_1}^G(A) \rtimes G_1 \;\geq\; \Ind_{G_1}^G(A) \;\geq\; {\rm ker}(\pi)\;\geq\; 1.
\]
In particular we have the following commutative diagram:
\[
\xymatrix{
\Gal(\hat{N}/E) \ar[r]^\cong\ar[d]^{{\rm res}} &\Ind_{G_1}^G(A)\ar[d]^\pi \\
 \Gal(N/E)\ar[r]^\cong & A.
}
\]

\section{Hilbertian fields}
\noindent
We will use the following specialization result for Hilbertian fields:

\begin{lemma}\label{lem:places}
Let $K_1$ be a Hilbertian field, 
let $\mathbf{x}=(x_1, \ldots, x_d)$ be a finite tuple of variables, let  $0\neq g(\mathbf{x})\in K_1[\mathbf{x}]$,
and consider field extensions $M,E,E_1,N$ of $K_1$ as in the following diagram.
\[
\xymatrix{
	&M\ar@{-}[r]
		&ME_1\ar@{-}[r]
			&ME_1(\mathbf{x})\ar@{-}[r]
				&MN
\\
K_1\ar@{-}[r]
	&E\ar@{-}[r]\ar@{-}[u]
		&E_1\ar@{-}[u]\ar@{-}[r]
			&E_1(\mathbf{x})\ar@{-}[u]\ar@{-}[r]
				&N\ar@{-}[u]
}
\]
Assume that $E, E_1, M$ are finite Galois extensions of $K_1$, $E= E_1\cap M$, 
$N$ is a finite Galois extension of $K_1(\mathbf{x})$ that is regular over $E_1$, 
and let $y\in N$. 
Then there exists an $E_1$-place $\phi$ of $N$ such that $\boldsymbol{b}=\phi(\mathbf{x})$ and $\phi(y)$ are finite, $g(\boldsymbol{b})\neq 0$, the residue fields of $K_1(\mathbf{x})$, $E_1(\mathbf{x},y)$ and $N$ are $K_1$, $E_1(\phi(y))$ and $\bar{N}$, respectively, where $\bar{N}$ is a Galois extension of $K_1$ which is linearly disjoint from $M$ over $E$, and $\Gal(\bar{N}/K_1)\cong \Gal(N/K_1(\mathbf{x}))$.
\end{lemma}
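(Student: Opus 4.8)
The plan is to deduce everything from a single application of Hilbert's irreducibility theorem over $K_1$ to the compositum $MN$, choosing the specialization so as to preserve the full Galois group of $MN/K_1(\mathbf{x})$. Preservation of this one group will encode simultaneously the isomorphism $\Gal(\bar{N}/K_1)\cong\Gal(N/K_1(\mathbf{x}))$ and the linear disjointness of $\bar{N}$ and $M$ over $E$, so that the two parts of the conclusion that look independent are in fact extracted from the same specialization.

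First I would record the group-theoretic shape of the compositum. Since $M(\mathbf{x})/K_1(\mathbf{x})$ is the constant field extension of the finite Galois extension $M/K_1$, the field $MN$ is Galois over $K_1(\mathbf{x})$, its constant field is $ME_1$, and $MN$ is regular over $ME_1$. The key computation is that
\[
 N\cap M(\mathbf{x})=E(\mathbf{x}).
\]
Indeed, $N$ is regular over $E_1$, so $N$ is linearly disjoint from $\overline{K_1}(\mathbf{x})$ over $E_1(\mathbf{x})$, whence $N\cap M(\mathbf{x})\subseteq N\cap\overline{K_1}(\mathbf{x})=E_1(\mathbf{x})$; and $E_1,M$ are linearly disjoint over $E=E_1\cap M$ (being two Galois extensions of $K_1$ with intersection $E$), so $E_1(\mathbf{x})\cap M(\mathbf{x})=E(\mathbf{x})$. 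Restriction therefore identifies $\Gal(MN/K_1(\mathbf{x}))$ with the fibre product $\Gal(N/K_1(\mathbf{x}))\times_{\Gal(E/K_1)}\Gal(M/K_1)$.

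Next I would invoke the Hilbertianity of $K_1$ for $MN/K_1(\mathbf{x})$, cf.~\cite{FriedJarden}: choosing a primitive element and passing to the separable Hilbert set on which its minimal polynomial specializes without reducing the Galois group, I obtain a $K_1$-rational place, i.e.\ a specialization $\mathbf{x}\mapsto\mathbf{b}\in K_1^d$, whose associated $E_1$-place $\phi$ of $MN$ has residue field $M\bar{N}$ with $\Gal(M\bar{N}/K_1)\cong\Gal(MN/K_1(\mathbf{x}))$; here the residue field of $K_1(\mathbf{x})$ is $K_1$, and since $M/K_1$ is Galois the residue field of $M(\mathbf{x})$ is $M$, so the residue field of $MN$ is indeed $M\bar{N}$. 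Restricting $\phi$ to $N$ yields $\bar{N}$ with $\Gal(\bar{N}/K_1)\cong\Gal(N/K_1(\mathbf{x}))$, which is the asserted isomorphism. Note $E_1\subseteq\bar{N}$ because $E_1$ is the constant field of $N$ and $\phi$ is an $E_1$-place, so $E=E_1\cap M\subseteq\bar{N}\cap M$. Comparing orders in the fibre product, using $[\bar{N}:K_1]=[N:K_1(\mathbf{x})]$, forces $[\bar{N}\cap M:K_1]=[E:K_1]$, hence $\bar{N}\cap M=E$; as $\bar{N}/K_1$ and $M/K_1$ are Galois, this means $\bar{N}$ is linearly disjoint from $M$ over $E$.

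Finally I would arrange the remaining conditions by shrinking the Hilbert set. The requirement $g(\mathbf{b})\neq 0$ removes only the proper Zariski-closed zero set of $g$, and the requirement that $\phi(y)$ be finite with the residue field of $E_1(\mathbf{x},y)$ equal to $E_1(\phi(y))$ is a non-degeneracy condition on $y$ (finiteness of $\phi(y)$ and non-vanishing of the relevant discriminant), which one secures by incorporating $y$ into the primitive element or by intersecting with the Hilbert set attached to the minimal polynomial of $y$ over $E_1(\mathbf{x})$. Since a separable Hilbert set of a Hilbertian field remains nonempty after deleting such thin and Zariski-closed sets, a single $\mathbf{b}$ satisfying all requirements exists. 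The main obstacle is the second step: setting up the compositum $MN$ and verifying the fibre-product structure, so that preservation of one Galois group delivers both the group isomorphism and the linear disjointness with $M$; once this is in place, the irreducibility argument and the clearing of the auxiliary conditions are routine.
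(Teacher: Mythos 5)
Your proposal is correct and takes essentially the same route as the paper's proof: both first establish $N\cap M(\mathbf{x})=E(\mathbf{x})$ via linear disjointness, then apply a single Hilbertian specialization over $K_1$ preserving $\Gal(MN/K_1(\mathbf{x}))$ (the paper cites Lemma~13.1.1 of Fried--Jarden, also covering $E_1(\mathbf{x},y)$, $y$, and $g$), and finally extract both the isomorphism $\Gal(\bar{N}/K_1)\cong\Gal(N/K_1(\mathbf{x}))$ and the equality $\bar{N}\cap M=E$ from that one preservation. The only cosmetic difference is in the last step, where you use a degree count in the fibre product while the paper invokes the induced isomorphism of intermediate-field lattices; these are equivalent.
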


\begin{proof}
$E_1$ and $M$ are linearly disjoint over $E$, and $N$ and $ME_1$ are linearly disjoint over $E_1$. We thus get that $M$ and $N$ are linearly disjoint over $E$. Thus $N$ is linearly disjoint from $M(\mathbf{x})$ over $E(\mathbf{x})$, so $N\cap M(\mathbf{x})=E(\mathbf{x})$.

For every $\mathbf{b}\in K_1^d$ there exists a $K_1$-place $\varphi_{\mathbf{b}}$ of $K_1(\mathbf{x})$ with residue field $K_1$ and $\varphi_{\mathbf{b}}(\mathbf{x})=\mathbf{b}$. It extends uniquely to $ME_1(\mathbf{x})$, and the residue fields of
$M(\mathbf{x})$ and $E_1(\mathbf{x})$ are $M$ and $E_1$, respectively.

Since $K_1$ is Hilbertian, by \cite[Lemma 13.1.1]{FriedJarden} (applied to the three separable extensions $E_1(\mathbf{x},y)$, $N$ and $MN$ of $K_1(\mathbf{x})$)
there exists $\mathbf{b}\in K_1^d$ with $g(\mathbf{b})\neq 0$ such that any extension $\varphi$ of $\varphi_{\mathbf{b}}$ to $MN$ satisfies the following: 
$\phi(y)$ is finite, the residue field of $E_1(\mathbf{x},y)$ is $E_1(\varphi(y))$, 
the residue fields $\overline{MN}$ and $\overline{N}$ of $MN$ and $N$, respectively, are Galois over $K_1$, 
and $\varphi$ induces isomorphisms $\Gal(N/K_1(\mathbf{x}))\cong\Gal(\overline{N}/K_1)$ and $\Gal(MN/K_1(\mathbf{x}))\cong\Gal(\overline{MN}/K_1)$.

By Galois correspondence, the latter isomorphism induces an isomorphism of the lattices of intermediate fields of $MN/K_1(\mathbf{x})$
and $\overline{MN}/K_1$. Hence, $N\cap M(\mathbf{x})=E(\mathbf{x})$ implies that
$\overline{N}\cap M=E$, which means that $\overline{N}$ and $M$ are linearly disjoint over $E$.
\end{proof}

We will apply the following Hilbertianity criterion:

\begin{proposition}[{\cite[Lemma 2.4]{HaranDiamond}}]\label{prop:haran}
Let $P$ be a field and let $x$ be transcendental over $P$.
Then $P$ is Hilbertian if and only if for every absolutely irreducible $f\in P[X,Y]$, monic in $Y$, and every
finite Galois extension $P'$ of $P$ such that $f(x,Y)$ is Galois over $P'(x)$, there are
infinitely many $a\in P$ such that $f(a,Y)\in P[Y]$ is irreducible over $P'$.
\end{proposition}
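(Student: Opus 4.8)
The plan is to prove the two implications separately; the content of the statement is that general Hilbertianity, which a priori concerns arbitrary irreducible polynomials that are merely separable in $Y$, can be tested on the much more rigid class of absolutely irreducible $f$ whose specialization $f(x,Y)$ is Galois over a finite Galois extension $P'(x)$. The bridge between the two classes is the passage to a Galois closure together with its field of constants: given any irreducible separable monic $g(X,Y)\in P[X,Y]$, one forms the Galois closure $\hat{N}$ of $P(x)[Y]/(g(x,Y))$ over $P(x)$ and lets $P'$ be the algebraic closure of $P$ in $\hat{N}$, a finite Galois extension of $P$ over which $\hat{N}$ becomes regular; the polynomial defining $\hat{N}$ over $P'(x)$ is then absolutely irreducible and Galois over $P'(x)$, which is precisely the shape the criterion addresses.

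For the implication that Hilbertianity implies the criterion, I would start from a given absolutely irreducible $f$ and finite Galois $P'/P$ with $N:=P'(x)[Y]/(f(x,Y))$ Galois over $P'(x)$; absolute irreducibility makes $N$ regular over $P'$. Passing to the Galois closure of $N$ over $P(x)$ and applying a Galois specialization result of the type provided by Lemma~\ref{lem:places} (with $K_1=P$, $E_1=P'$, and the auxiliary polynomial used to avoid degenerate values), one obtains $a\in P$ for which the specialized residue field is Galois over $P'$ with Galois group isomorphic to $\Gal(N/P'(x))$; this isomorphism says precisely that $f(a,Y)$ is irreducible over $P'$. To upgrade one good value to infinitely many, I would iterate, at each step enlarging the nonzero auxiliary polynomial so that it vanishes at the finitely many previously produced values, forcing a genuinely new $a$ each time.

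The substantive direction is the converse. Reducing general Hilbert sets to a single irreducible separable monic $g$ is standard (cf.~\cite{FriedJarden}), so I would take such a $g$ and construct $\hat{N}$, $P'$ and the absolutely irreducible, Galois-over-$P'(x)$ polynomial $f$ as above. The criterion supplies infinitely many $a\in P$ with $f(a,Y)$ irreducible over $P'$. For all but finitely many of these (those with good reduction of the place $x\mapsto a$), the residue field $\hat{N}_a$ is Galois over $P$, contains $P'$, and satisfies $\Gal(\hat{N}_a/P')\cong\Gal(\hat{N}/P'(x))$ because $f(a,Y)$ is irreducible over $P'$; since $P'/P$ is Galois, restriction also surjects $\Gal(\hat{N}_a/P)$ onto $\Gal(P'/P)$. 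Comparing orders through the exact sequence $1\to\Gal(\hat{N}/P'(x))\to\Gal(\hat{N}/P(x))\to\Gal(P'/P)\to 1$ forces $\Gal(\hat{N}_a/P)\cong\Gal(\hat{N}/P(x))$ compatibly with the specialization. Transporting the factorization of $g(x,Y)$ over $P(x)$ through this isomorphism then shows that $g(a,Y)$ is irreducible over $P$, and there are infinitely many such $a$.

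The main obstacle is the interaction between the constant field $P'$ and the base field $P$ in this last step: irreducibility of $f(a,Y)$ over the extension $P'$ only certifies that the geometric (regular) part of the Galois group is preserved under specialization, and one must argue that this information, combined with the constant extension $P'/P$ that is automatically present in any residue field containing $P'$, reassembles the full arithmetic Galois group over $P$, so that irreducibility descends from $P'$ to $P$. Making the specialization isomorphism compatible enough that transitivity of the group action on the roots of $g$ transfers, and controlling the finitely many bad specializations, are the delicate points; the remaining arguments are routine Galois correspondence together with the standard reduction to a single polynomial.
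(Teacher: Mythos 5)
The paper does not prove this proposition at all; it quotes it as \cite[Lemma 2.4]{HaranDiamond}, so what you are attempting blind is precisely the content of Haran's lemma. Your ``only if'' direction is essentially correct and standard: it amounts to the fact that separable Hilbert subsets of the finite separable extension $P'$ of the Hilbertian field $P$ contain infinitely many elements of $P$, and the route you sketch through a specialization statement in the style of Lemma~\ref{lem:places} works as well.

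The ``if'' direction -- the one the paper actually uses -- has a genuine gap, and it sits at the very first step of your reduction. Given an irreducible separable monic $g\in P[X,Y]$, you form the Galois closure $\hat{N}$ of $P(x)[Y]/(g(x,Y))$ over $P(x)$, let $P'$ be its field of constants, and then apply the criterion to ``the polynomial defining $\hat{N}$ over $P'(x)$'', asserting this is ``precisely the shape the criterion addresses''. It is not: that polynomial is the minimal polynomial over $P'(x)$ of a generator of $\hat{N}$, so its coefficients lie in $P'$, whereas the criterion is only available for $f\in P[X,Y]$, with coefficients in $P$. This mismatch cannot be repaired by choosing a better generator: an absolutely irreducible $f\in P[X,Y]$, monic in $Y$, having a root $z$ of $f(x,Y)$ with $P'(x)(z)=\hat{N}$ exists if and only if $G_0=\Gal(\hat{N}/P'(x))$ has a complement in $G=\Gal(\hat{N}/P(x))$. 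Indeed, absolute irreducibility forces $f(x,Y)$ to be the minimal polynomial of $z$ over $P(x)$ and over $P'(x)$ simultaneously, so $H=\Gal(\hat{N}/P(x)(z))$ has order $[P':P]$ and meets $G_0$ trivially; conversely, a complement $H$ yields such an $f$ from a primitive element of the fixed field of $H$. Such complements need not exist: for $g(T,X)=X^4-10TX^2+5T^2$ over $P=\QQ$, the Galois closure over $\QQ(x)$ is cyclic of degree $4$ with field of constants $\QQ(\sqrt{5})$, and $\ZZ/4\ZZ$ has no complement to its order-$2$ subgroup. For such $g$ there is simply no polynomial to which your invocation of the criterion -- and hence your subsequent descent argument, which is otherwise correct -- can be applied. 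Note also that your closing paragraph locates the delicate point in the descent step (reassembling the full group over $P$ from irreducibility over $P'$); that step is in fact fine by the order count you describe, and the real obstruction is upstream, in the applicability of the criterion at all. Closing this gap requires constructing an auxiliary absolutely irreducible polynomial with coefficients in $P$, Galois over $L(x)$ for a suitably enlarged finite Galois extension $L/P$, whose good specializations still force a full decomposition group in $\hat{N}$; this field-crossing type construction is the actual substance of the cited lemma and is absent from your sketch.
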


\section{Proof of Theorem \ref{thm:main}}

\begin{lemma}\label{lem:main}
Let $K\subseteq K_1\subseteq L$ be fields such that $K$ is Hilbertian, $L/K$ is Galois, $K_1/K$ is finite Galois, and $L/K_1$ satisfies Condition~\ref{condL}. 
Let $e\geq 1$, let $f\in K_1[X,Y]$ be an absolutely irreducible polynomial that is Galois over $K_s(X)$ and let $K_1'$ be a finite separable extension of $K_1$. Then for almost all $\bfsig \in \Gal(K_1)^e$ there exist infinitely many $a \in L\bsK$ such that $f(a,Y)$ is irreducible over $K_1'\cdot L\bsK$.
\end{lemma}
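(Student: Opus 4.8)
The plan is to verify for almost all $\bfsig$ a Haran-style irreducibility statement for $P=L\bsK$ by realizing twisted wreath products inside $L$ and reading the required specializations off the restriction of $\bfsig$. Since $f$ is absolutely irreducible and Galois over $K_s(X)$, its splitting field $N_0$ over $K_1(X)$ is a regular Galois extension with $A:=\Gal(N_0/K_1(X))$; I would fix a finite Galois extension $E_1/K_1$ containing $K_1'$ and regard $A$ as a $G_1$-group for $G_1=\Gal(E_1/K_1)$, with the action induced by the way $K_1'$ embeds. Condition~\ref{condL}, via Lemma~\ref{lem:Lplus} (using Lemma~\ref{lem:lindisj} to discard the finitely many $M_i$ meeting $E_1$), supplies an infinite linearly disjoint family $(M_i)_{i>0}$ of subextensions of $L/K_1$, Galois over $K$, all with $\Gal(M_i/K_1)\cong G_2$ for a single group $G_2$ of order $|G_2|\ge|A|$. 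Setting $G=G_1\times G_2$ places us exactly in the situation of Lemma~\ref{lem:product}.

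Next I would realize the (possibly twisted) wreath product $A\wr_{G_1}G$ on top of each $M_i$. Proceeding inductively, at step $i$ I apply Lemma~\ref{lem:places} with $M=E_1M_i\hat N_1\cdots\hat N_{i-1}$ (a finite Galois extension of $K_1$) and a polynomial $g$ chosen to avoid the previously used specialization points. This produces $a_i\in K_1$, pairwise distinct, such that the splitting field $\bar N_i$ of $f(a_i,Y)$ over $K_1$ satisfies $\Gal(\bar N_i/K_1)\cong A$ and is linearly disjoint from $M$ over $E_1\cap M$. Then $\Gal(\bar N_iE_1M_i/E_1M_i)\cong A$, the Galois closure $\hat N_i$ of $\bar N_iE_1M_i$ over $K_1$ realizes $A\wr_{G_1}G$ in the sense of Section~4, and the induction guarantees that the family $(\hat N_i)_{i>0}$ is linearly disjoint over $K_1$.

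For the group-theoretic transfer, note first that $\Gal(K_1)\trianglelefteq\Gal(K)$ contains $\bfsig$, hence contains $\bsK$, and that $\bsKone\le\bsK$; thus $a_i\in K_1\subseteq L\bsK$ automatically, and since $L\bsK\subseteq L\bsKone$ it suffices to prove $f(a_i,Y)$ irreducible over the larger field $K_1'\cdot L\bsKone$. Restricting to $\Gal(\hat N_i/K_1)\cong A\wr_{G_1}G$, the image of $\bsKone$ is precisely the normal closure of $\mathrm{res}_{\hat N_i}(\bfsig)$. I would let $\Sigma_i$ be the set of $\bfsig$ for which $\mathrm{res}_{\hat N_i}(\sigma_1)$ equals the element $\tau=(\zeta,(g_1,1))$ furnished by Lemma~\ref{lem:product}. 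On $\Sigma_i$ that lemma gives $\pi(N\cap I)=A$, where $N$ is the normal subgroup generated by $\tau$ and $I=\Ind_{G_1}^G(A)$; through the realization diagram this says that the restriction of $\bsKone$ surjects onto the copy of $A$, i.e.\ $\bar N_i$ is linearly disjoint from $K_1'\cdot L\bsKone$ over $K_1$, which forces $f(a_i,Y)$ to be irreducible over $K_1'\cdot L\bsKone$, hence over $K_1'\cdot L\bsK$.

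Finally, each $\Sigma_i$ has the same positive measure $p=1/|A\wr_{G_1}G|$, and the linear disjointness of the $\hat N_i$ over $K_1$ makes the restrictions $\mathrm{res}_{\hat N_i}(\bfsig)$ independent, so the $\Sigma_i$ are independent by Lemma~\ref{lem:indep}. Since $\sum_i p=\infty$, a Borel–Cantelli argument shows that almost every $\bfsig$ lies in infinitely many $\Sigma_i$, and therefore admits infinitely many distinct good specializations $a_i\in L\bsK$; Lemma~\ref{lem:measure} lets me replace the genuine irreducibility events by the clean restriction events $\Sigma_i$ of equal measure. I expect the main obstacle to be this transfer step: matching the realization tower with $\bsKone$ so that the single condition $\mathrm{res}_{\hat N_i}(\sigma_1)=\tau$ really yields, via $\pi(N\cap I)=A$, the linear disjointness of $\bar N_i$ from $K_1'\cdot L\bsKone$, together with the bookkeeping in the inductive construction needed to make $(\hat N_i)$ genuinely linearly disjoint so that Lemma~\ref{lem:indep} applies.
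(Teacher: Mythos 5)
Your overall skeleton (twisted wreath products, Lemma~\ref{lem:product}, linear disjointness, Borel--Cantelli) matches the paper, but the core construction has a fatal gap: the claim that ``the Galois closure $\hat N_i$ of $\bar N_iE_1M_i$ over $K_1$ realizes $A\wr_{G_1}G$''. It cannot. Since your specialization points $a_i$ lie in $K_1$, the splitting field $\bar N_i$ of $f(a_i,Y)$ is Galois over $K_1$, as are $E_1$ and $M_i$; hence the compositum $\bar N_iE_1M_i$ is already Galois over $K_1$ and its Galois group embeds into $A\times G_1\times G_2$, of order $|A|\,|G_1|\,|G_2|$, whereas $|A\wr_{G_1}G|=|A|^{|G_2|}\,|G_1|\,|G_2|$. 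Nor can any larger $\hat N_i$ fix this: in a tower realizing $A\wr_{G_1}G$ the fourth field is the fixed field of $\ker(\pi)$, and $\ker(\pi)$ has trivial normal core in $A\wr_{G_1}G$ (its $G$-conjugates intersect trivially), so once $|A|\geq 2$ and $[G:G_1]\geq 2$ that field is \emph{never} Galois over the base field --- in particular it is never the splitting field of $f(a,Y)$ with $a$ in the base field. This is precisely the point of Haran's method as the paper uses it: the wreath product is realized over $K_1(\mathbf{x})$ with $N_i$ generated by a root of $f(\sum_\nu w_{i\nu}x_\nu,Y)$, so that after specializing (Lemma~\ref{lem:places}) the point $a_j=\sum_\nu w_{i_j\nu}\phi_j(x_\nu)$ lies in $E_{i_j}'\subseteq L$, \emph{not} in $K_1$; its distinct conjugates under $\Gal(E_{i_j}'/K_1)\cong G_2$ supply the $[G:G_1]$ copies of $A$ inside $I$, and membership $a_j\in L\bsK$ is then not automatic but is deduced from $\bfsig\in\Sigma_j$ (on $\Sigma_j$ the $\sigma_\nu$ fix $E_{i_j}'$, which is Galois over $K$).

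The gap is not repairable bookkeeping, because with specialization points in $K_1$ the statement you reduce to is false. Take $K=K_1=\QQ$, $L=K_s$, $e=1$, $f(X,Y)=Y^2-X$, $K_1'=\QQ(i)$. For every $\sigma\in\Gal(\QQ)$ with $\sigma(i)\neq i$ (a set of measure $\tfrac12$) and every $a\in\QQ$, the group $\Gal(\QQ(i,\sqrt{a})/\QQ)$ is abelian, so the restriction of $[\sigma]_\QQ$ to it is the cyclic group generated by the restriction of $\sigma$, whose $\QQ(i)$-coordinate is nontrivial; hence every element of $[\sigma]_\QQ$ fixing $i$ fixes $\sqrt{a}$, i.e.\ $\sqrt{a}\in\QQ(i)\cdot K_s[\sigma]_\QQ$, so $f(a,Y)$ is reducible over $K_1'\cdot L[\sigma]_\QQ$ for \emph{every} $a\in K_1$. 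Thus no argument that only produces points of $K_1$ can give the conclusion for almost all $\bfsig$; the points must move with $\bfsig$, as in the paper. Two further steps would fail even granting a wreath-product realization. First, your $\Sigma_i$ pins the entire restriction of $\sigma_1$, including the $G_1$-coordinate $g_1$; since all your $\hat N_i$ contain $E_1$, every $\Sigma_i$ is contained in the single event that $\sigma_1|_{E_1}$ equals a fixed element, which has measure less than $1$, so the $\Sigma_i$ cannot be independent and Borel--Cantelli cannot yield measure $1$ (also, fields all containing $E_1\supsetneq K_1$ are never linearly disjoint over $K_1$, contrary to what you assert). The paper avoids this by letting the $G_1$-coordinate range freely --- this is exactly why Lemma~\ref{lem:product} is stated ``for every $g_1\in G_1$'' --- and by applying Lemma~\ref{lem:indep} with $H=\Gal(E)^e$, since the $\hat M_{i_j}$ are linearly disjoint only over $E$. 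Second, the correct measure is accordingly $|G_1|^e/|A\wr_{G_1}G|^e$, not $1/|A\wr_{G_1}G|$.
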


\begin{proof}
Let $E$ be a finite Galois extension of $K$ such that $K_1'\subseteq E$ and $f$ is Galois over $E(X)$ and put $G_1=\Gal(E/K_1)$. 
Let $x$ be  transcendental over $K$ and $y$ such that $f(x,y)=0$. Let $F'=K_1(x,y)$ and $F=E(x,y)$. Since $f(X,Y)$ is absolutely irreducible, $F'/K_1$ is regular, hence $\Gal(F/F')\cong G_1$. Since $f(X,Y)$ is Galois over $E(X)$, $F/K_1(x)$ is Galois (as the compositum of $E$ and the splitting field of $f(x,Y)$ over $K_1(x)$). Then $A=\Gal(F/E(x))$ is a subgroup of 
$\Gal(F/K_1(x))$,
so 
$G_1=\Gal(F/F')$ acts on $A$ 
by conjugation. 
\[
\xymatrix{
F'\ar@{-}[r]^{G_1}&F\\
K_1(x)\ar@{-}[u]\ar@{-}[r]^{G_1}& E(x)\ar@{-}[u]_A
}
\]
Since $L/K_1$ satisfies Condition~\ref{condL},
by Lemma \ref{lem:Lplus}, applied to $M_0=E$, there exists a finite group $G_2$ with $d:=|G_2|\geq|A|$ and a sequence $(E_{i}')_{i>0}$ of linearly disjoint subextensions of $L/K_1$ which are Galois over $K$ with $\Gal(E_{i}'/K_1)\cong G_2$ such that the family $E,(E_i')_{i>0}$ is linearly disjoint over $K_1$.
Let $E_i = EE_{i}'$. Then $E_i/K$ is Galois and $\Gal(E_i/K_1)\cong G:=G_1\times G_2$ for every $i$.

Let $\mathbf{x} = (x_{1}, \ldots, x_{d})$ be a $d$-tuple of variables, and for each $i$ 
choose a basis $w_{i1}, \ldots, w_{id}$ of $E_{i}'/K_1$.
By \cite[Lemma 3.1]{HaranDiamond}, for each $i$ we have a tower 
\begin{equation}\label{tower}
K_1(\mathbf{x})\subseteq E_{i}'(\mathbf{x}) \subseteq E_{i}(\mathbf{x}) \subseteq N_i \subseteq \hat{N}_i
\end{equation}
that realizes the twisted wreath product $A\wr_{G_1} G$, such that $\hat{N}_i$ is regular over $E_i$ and $N_i = E_{i}(\mathbf{x})(y_i)$, where ${\rm irr}(y_i, E_{i}(\mathbf{x})) = f(\sum_{\nu=1}^{d}  w_{i\nu} x_{\nu}, Y)$.

We inductively construct an ascending sequence $(i_j)_{j=1}^\infty$ of positive integers 
and for each $j\geq 1$ an $E_{i_j}$-place $\phi_j$ of $\hat{N}_{i_j}$ such that
\begin{enumerate}
\item[(a)] the elements $a_j := \sum_{\nu=1}^d w_{i_j \nu}\phi_j(x_{\nu})\in E_{i_j}'$ are distinct for $j\geq 1$,
\item[(b)] the residue field tower of \eqref{tower}, for $i=i_j$, under $\phi_j$,
\begin{equation}\label{tower2}
K_1\subseteq E_{i_j}' \subseteq E_{i_j} \subseteq M_{i_j}\subseteq \hat{M}_{i_j},
\end{equation}
realizes the twisted wreath product $A\wr_{G_1} G$ and $M_{i_j}$ is generated by a root of $f(a_j, Y)$ over $E_{i_j}$,
\item[(c)] the family $(\hat{M}_{i_j})_{j=1}^\infty$ is linearly disjoint over $E$.
\end{enumerate}
Indeed, suppose that $i_1,\dots,i_{j-1}$ and $\phi_1,\dots,\phi_{j-1}$ are already constructed and let $M = \hat{M}_{i_1} \cdots\hat{M}_{i_{j-1}}$. 
By Lemma~\ref{lem:lindisj} there is $i_{j}>i_{j-1}$ such that $E_{i_{j}}'$ is linearly disjoint from $M$ over $K_1$. Thus, $E_{i_j}$ is linearly disjoint from $M$ over $E$.  
Since $K$ is Hilbertian and $K_1/K$ is finite, $K_1$ is Hilbertian.
Applying Lemma~\ref{lem:places} to $M$, 
$E$, $E_{i_j}$, $\hat{N}_{i_j}$, and  $y_{i_j}$, gives an $E_{i_j}$-place $\phi_j$ of $\hat{N}_{i_j}$ such that (b) and (c) are satisfied. 
Choosing $g$ suitably we may assume that $a_j=\phi_j(\sum_{\nu=1}^d w_{i_j\nu}x_\nu)\notin\{a_1,\dots,a_{j-1}\}$, so also (a) is satisfied.

We now fix $j$ and make the following identifications:
$\Gal(\hat{M}_{i_j}/K_1)=A\wr_{G_1}G=I\rtimes (G_1\times G_2)$,
$\Gal(\hat{M}_{i_j}/E_{i_j})=I$,
$\Gal(M_{i_j}/E_{i_j})=A$.
The restriction map $\Gal(\hat{M}_{i_j}/E_{i_j})\rightarrow\Gal(M_{i_j}/E_{i_j})$ 
is thus identified with $\pi:A\wr_{G_1}G\rightarrow A$,
and $\Gal(\hat{M}_{i_j}/M_{i_j})=\ker(\pi)$.
Let $\zeta\in I:=\Ind_{G_1}^{G}(A)$ be as in Lemma \ref{lem:product}
and let $\Sigma_j$ be the set of those $\bfsig\in\Gal(K_1)^e$ such that 
for every $\nu\in\{1,\dots,e\}$,
$\sigma_\nu|_{\hat{M}_{i_j}}=(\zeta,(g_{\nu1},1))\in I\rtimes (G_1\times G_2)$ for some $g_{\nu1}\in G_1$. Then the normal subgroup $N$ generated by $\bfsig|_{\hat{M}_{i_j}}$ in $\Gal(\hat{M}_{i_j}/K_1)$ satisfies $\pi(N\cap I) = A$. 

Now fix $\bfsig=(\sigma_1, \ldots, \sigma_e)\in\Sigma_j$ and let $P=L\bsK$ and $Q=K_s{[\bfsig]_{K_1}}$.
Then 
$$
 P = L\cap K_s\bsK \subseteq K_s\bsK \subseteq K_s{[\bfsig]_{K_1}} = Q.
$$ 
Since $E_{i_j}'$ is fixed by $\sigma_\nu$, $\nu=1, \ldots, e$, and Galois over $K$, we have $E_{i_j}'\subseteq P\subseteq Q$. Thus  $a_{j}\in  P$ and $E_{i_j}Q=EQ$. 
Therefore, since $M_{i_j}$ is generated by a root of $f(a_j,Y)$ over $E_{i_j}$, we get that $M_{i_j}Q$ is generated by a root of $f(a_j, Y)$ over $EQ$.

\[
\xymatrix{
Q\ar@{-}[r]\ar@{-}[d]
	&E_{i_j}Q\ar@{-}[r]\ar@{-}[d]
		&M_{i_j}Q\ar@{-}[r]\ar@{-}[d]
			&\hat{M}_{i_j}Q\ar@{-}[d]\\
\hat{M}_{i_j}\cap Q\ar@{-}[r]
	&(\hat{M}_{i_j}\cap Q)E_{i_j}\ar@{-}[r]\ar@{-}[d]
		&(\hat{M}_{i_j}\cap Q)M_{i_j}\ar@{-}[r]\ar@{-}[d]
			&\hat{M}_{i_j}\ar@{.}[ld]|{\ker(\pi)}\ar@{.}[lld]|{I}\ar@/^15pt/@{.}[lll]|N\\
	&E_{i_j}\ar@{-}[r]_A
		&M_{i_j}
}
\]
The equality $N=\Gal(\hat{M}_{i_j}/\hat{M}_{i_j}\cap Q)$ gives
\[
\Gal(\hat{M}_{i_j}Q/M_{i_j}Q) \cong \Gal(\hat{M}_{i_j}/(\hat{M}_{i_j}\cap Q)M_{i_j}) = N\cap\ker(\pi)
\]
and
\[
\Gal(\hat{M}_{i_j}Q/E_{i_j}Q) \cong \Gal(\hat{M}_{i_j}/(\hat{M}_{i_j}\cap Q)E_{i_j}) = N\cap I.
\]
Therefore,
\begin{eqnarray*}
&\Gal(M_{i_j}Q/E_{i_j}Q) \cong (N\cap I)/(N\cap\ker(\pi)) \cong \pi(N\cap I) = A.
\end{eqnarray*}
Since $|A| = \deg_Y f(X,Y) = \deg f(a_j,Y)$, we get that $f(a_j,Y)$ is irreducible over $EQ$.
Finally, we have 
$K_1'P\subseteq EP \subseteq EQ$, therefore $f(a_j,Y)$ is irreducible over $K_1'P$.

It suffices to show that almost all $\bfsig\in \Gal(K_1)^e$ lie in infinitely many $\Sigma_j$.
Since, by (c), the family $(\hat{M}_{i_j})_{j=1}^\infty$ is linearly disjoint over $E$, the sets $\Sigma_{j}$ are
independent for $\mu=\mu_{\Gal(K_1)^e}$ (Lemma~\ref{lem:indep}).
Moreover, 
\[
\mu(\Sigma_{j})=\frac{|G_1|^e}{|A\wr_{G_1}G|^e}>0
\] 
does not depend on $j$, so $\sum_{j=1}^\infty\mu(\Sigma_{j})=\infty$.
It follows from the Borel-Cantelli lemma \cite[Lemma 18.3.5]{FriedJarden} that almost all $\bfsig\in\Gal(K_1)^e$ lie in  infinitely many $\bfsig\in\Sigma_{j}$.
\end{proof}

\begin{proposition}\label{prop:main}
Let $K\subseteq K_1\subseteq L$ be fields such that $K$ is countable Hilbertian, $L/K$ is Galois, $K_1/K$ is finite Galois and $L/K_1$ satisfies Condition~\ref{condL}.
Let $e\geq 1$.
Then $L\bsK$ is Hilbertian for almost all $\bfsig\in\Gal(K_1)^e$.
\end{proposition}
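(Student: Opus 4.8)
The plan is to check that $P:=L\bsK$ is Hilbertian by means of Haran's criterion (Proposition~\ref{prop:haran}), feeding it with Lemma~\ref{lem:main}. The only genuine difficulty is that Lemma~\ref{lem:main} treats a polynomial $f$ over a \emph{fixed} finite base, whereas Haran's criterion is applied over $P=L\bsK$, and both the polynomial and the auxiliary finite extension occurring there depend on $\bfsig$. I would therefore bound the set of ``bad'' $\bfsig$ by a countable union of null sets, one for each possible descent of the Haran data to a finite, $\bfsig$-independent list.

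First I would enumerate the data. As $K$ is countable, so is $L$, whence there are only countably many triples $\delta=(K_2,K_2',f)$ in which $K_2/K$ is finite Galois with $K_1\subseteq K_2\subseteq L$, $K_2'/K_2$ is finite separable, and $f\in K_2[X,Y]$ is absolutely irreducible, monic in $Y$, and Galois over $K_s(X)$. For such a $\delta$ let $B_\delta$ be the set of $\bfsig\in\Gal(K_1)^e$ with $K_2\subseteq L\bsK$ for which only finitely many $a\in L\bsK$ make $f(a,Y)$ irreducible over $K_2'\cdot L\bsK$. I would then prove two claims: (A) if $L\bsK$ is not Hilbertian then $\bfsig\in B_\delta$ for some $\delta$; and (B) each $B_\delta$ is contained in a $\mu_{\Gal(K_1)^e}$-null set. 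As there are only countably many $\delta$, these give the theorem.

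For (A) I would apply Proposition~\ref{prop:haran} to $P=L\bsK$: non-Hilbertianity yields an absolutely irreducible monic $f_0\in P[X,Y]$ and a finite Galois $P'/P$ with $f_0(x,Y)$ Galois over $P'(x)$ but only finitely many good $a\in P$. Taking $K_2$ to be the Galois closure over $K$, inside $P$, of $K_1$ together with the coefficients of $f_0$ and of the minimal polynomial over $P$ of a primitive element $\theta$ of $P'/P$ gives a finite Galois $K_2/K$ with $K_1\subseteq K_2\subseteq P\subseteq L$ (here $P/K$ Galois is used), and setting $f:=f_0$ and $K_2':=K_2(\theta)$ we get $P'=K_2'\cdot P$ with $K_2'/K_2$ finite separable. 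The one condition that is not immediate is that $f$ is Galois over $K_s(X)$; but this is forced: absolute irreducibility keeps $[K_s(X)(y):K_s(X)]=\deg_Y f$ for a root $y$, while all roots of $f(x,Y)$ already lie in $P'(x)(y)\subseteq K_s(x)(y)$ since $f(x,Y)$ is Galois over $P'(x)$ and $P'\subseteq K_s$. Hence $\delta=(K_2,K_2',f)$ is enumerated and $\bfsig\in B_\delta$.

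For (B) observe that $K_2\subseteq L\bsK$ means $\bsK$ fixes $K_2$, and since $K_2/K$ is Galois this is equivalent to $\bfsig\in\Gal(K_2)^e$; thus $B_\delta\subseteq\Gal(K_2)^e$. By Lemma~\ref{lem:Lup}, $L/K_2$ satisfies Condition~\ref{condL}, so Lemma~\ref{lem:main} applies with base field $K_2$, polynomial $f$ and $K_1'=K_2'$, giving that for almost all $\bfsig\in\Gal(K_2)^e$ there are infinitely many $a\in L\bsK$ with $f(a,Y)$ irreducible over $K_2'\cdot L\bsK$. Hence $B_\delta$ lies in a $\mu_{\Gal(K_2)^e}$-null set, and since $\Gal(K_2)^e$ is open in $\Gal(K_1)^e$ with $\mu_{\Gal(K_1)^e}(X)=[K_2:K_1]^{-e}\mu_{\Gal(K_2)^e}(X)$ for $X\subseteq\Gal(K_2)^e$, this null set is also $\mu_{\Gal(K_1)^e}$-null. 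I expect the main obstacle to be precisely the descent in (A): replacing the $\bfsig$-dependent data $(f_0,P')$ by a member of a fixed countable list while checking absolute irreducibility, monicity and the derived ``Galois over $K_s(X)$'' property so that Lemma~\ref{lem:main} can be invoked; once that is in place, the measure transfer in (B) via Lemma~\ref{lem:Lup} is routine.
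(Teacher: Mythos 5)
Your proposal is correct and takes essentially the same route as the paper's proof: the same countable enumeration of triples $(K_2,K_2',f)$, the same application of Lemma~\ref{lem:Lup} and Lemma~\ref{lem:main} over each $K_2$, and the same descent of the Haran data from $P=L\bsK$ to a finite, $\bfsig$-independent triple, only phrased contrapositively (a countable union of null ``bad'' sets instead of an intersection of full-measure sets). Your explicit verification that $f_0$ is Galois over $K_s(X)$ spells out a step the paper dispatches with ``in particular,'' but this is a detail, not a different argument.
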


\begin{proof}
Let $\mathcal{F}$ be the set of all triples $(K_2,K_2',f)$, where $K_2$ is a finite subextension of $L/K_1$ which is Galois over $K$, $K_2'/K_2$ is a finite separable extension (inside a fixed separable closure $L_s$ of $L$), and $f(X,Y)\in K_2[X,Y]$ is an absolutely irreducible polynomial that is Galois over $K_s(X)$. 
Since $K$ is countable, the family $\mathcal{F}$ is also countable.
If $(K_2,K_2',f)\in\mathcal{F}$, then $K_2$ is Hilbertian (\cite[Corollary 12.2.3]{FriedJarden}) and 
$L/K_2$ satisfies Condition~\ref{condL} (Lemma \ref{lem:Lup}),
hence Lemma \ref{lem:main} gives a set $\Sigma_{(K_2,K_2',f)}'\subseteq\Gal(K_2)^e$ of full measure in $\Gal(K_2)^e$
such that for every $\bfsig\in\Sigma'_{(K_2,K_2',f)}$ there
exist infinitely many $a\in L\bsK$ such that $f(a,Y)$ is irreducible over $K_2'\cdot L\bsK$. 
Let 
\[
 \Sigma_{(K_2,K_2',f)}=\Sigma_{(K_2,K_2',f)}'\cup (\Gal(K_1)^e\smallsetminus\Gal(K_2)^e).
\] 
Then $\Sigma_{(K_2,K_2',f)}$ has measure $1$ in $\Gal(K_1)^e$. We conclude that the measure of  $\Sigma=\bigcap_{(K_2,K_2',f)\in\mathcal{F}}\Sigma_{(K_2,K_2',f)}$ is $1$.

Fix a $\bfsig\in\Sigma$ and let $P=L\bsK$. Let $f\in P[X,Y]$ be absolutely irreducible and monic in $Y$, and let $P'$ be a finite Galois extension of $P$ such that $f(X,Y)$ is Galois over $P'(X)$. In particular, $f$ is Galois over $K_s(X)$. Choose a finite extension $K_2/K_1$ which is Galois over $K$ such that $K_2\subseteq P\subseteq L$ and $f\in K_2[X,Y]$. Let $K_2'$ be a finite extension of $K_2$ such that $PK_2'=P'$. Then $\bfsig\in \Gal(K_2)^{e}$. Since, in addition, $\bfsig\in \Sigma_{(K_2,K_2',f)}$, we get that $\bfsig\in \Sigma_{(K_2,K_2',f)}'$. Thus there exist infinitely many $a\in P$ such that $f(a,Y)$ is irreducible over $PK_2'=P'$. So,  by Proposition~\ref{prop:haran}, $P$ is Hilbertian.
\end{proof}

\begin{remark}
The proof of Proposition~\ref{prop:main} actually gives a stronger assertion: Under the assumptions of the proposition, for almost all $\bfsig \in \Gal(K_1)^e$ the field $K_s\bsKone$ is Hilbertian over $L\bsK$ in the sense of \cite[Definition 7.2]{BSPAC}. 
In particular, if $L/K$ satisfies Condition~\ref{condL}
(this holds for example for $L=K_{{\rm tot},S}$ from the introduction), then $K_s\bsK$ is Hilbertian over $L\bsK$. Since this is not the objective of this work, the details are left as an exercise for the interested reader. 
\end{remark}

\begin{proof}[Proof of Theorem \ref{thm:main}]
Let $K$ be a countable Hilbertian field, let $e\geq 1$, and let $L/K$ be a Galois extension. We need to prove that $L\bsK$ is Hilbertian for almost all $\bfsig\in \Gal(K)^e$. 

Let $\mathcal{F}$ be the set of finite Galois subextensions $K_1$ of $L/K$
for which $L/K_1$ satisfies Condition~\ref{condL}. 
Note that $\mathcal{F}$ is countable, since $K$ is.

Let $\Omega = \Gal(K)^e$, let $\mu=\mu_\Omega$, and let 
$$
 \Sigma=\{\bfsig\in\Omega : L\bsK \mbox{ is Hilbertian}\}.
$$ 
For $K_1\in\mathcal{F}$ let $\Omega_{K_1}=\Gal(K_1)^e$ and $\Sigma_{K_1}=\Omega_{K_1}\cap \Sigma$. Note that
$$
 \Omega_{K_1} = \left\{\bfsig\in\Omega : K_1\subseteq L\bsK \right\}.
$$
By Proposition~\ref{prop:main}, $\mu(\Sigma_{K_1})=\mu(\Omega_{K_1})$ for each $K_1$.
Let
$$
 \Delta \;:=\; \Omega \smallsetminus \bigcup_{K_1\in\mathcal{F}} \Omega_{K_1} \;=\; \left\{ \bfsig\in\Omega: K_1\not\subseteq L\bsK\mbox{ for all }K_1\in\mathcal{F}\right\}.
$$
If $\bfsig\in\Delta$, then $L\bsK/K$ is small by Proposition~\ref{lem:small}, so $L\bsK$ is Hilbertian by Proposition~\ref{lem:smallext}. 
Thus, $\Delta\subseteq\Sigma$. 
Since $\Omega = \Delta\cup\bigcup_{K_1\in\mathcal{F}} \Omega_{K_1}$, Lemma~\ref{lem:measure} implies that
$$
 \mu(\Sigma) = \mu\left((\Sigma\cap \Delta)\cup\bigcup_{K_1\in\mathcal{F}} \Sigma_{K_1}\right) = \mu\left(\Delta\cup\bigcup_{K_1\in\mathcal{F}} \Omega_{K_1}\right)=\mu(\Omega)=1,
$$
which concludes the proof of the theorem.
\end{proof}

\section*{Acknowledgements}
\noindent
The authors would like to express their sincere thanks to Moshe Jarden for pointing out a subtle gap in a previous version and for many useful suggestions and remarks.
This research was supported by the Lion Foundation Konstanz / Tel Aviv and the Alexander von Humboldt Foundation. 

\bibliographystyle{plain}
\bibliography{lit}

\begin{thebibliography}{10}

\bibitem{BarySorokerCharacterization}
Lior Bary-Soroker.
\newblock On the characterization of {H}ilbertian fields.
\newblock {\em International Mathematics Research Notices}, 2008.

\bibitem{BSPAC}
Lior Bary-Soroker.
\newblock On pseudo algebraically closed extensions of fields.
\newblock {\em Journal of Algebra}, 322(6):2082--2105, 2009.

\bibitem{BarySorokerFehm}
Lior Bary-Soroker and Arno Fehm.
\newblock On fields of totally {$S$}-adic numbers.
\newblock http://arxiv.org/abs/1202.6200, 2012.

\bibitem{FriedJarden}
M.~Fried and M.~Jarden.
\newblock {\em Field Arithmetic}.
\newblock Ergebnisse der Mathematik III {\bf 11}. Springer, 2008.
\newblock {3rd edition, revised by M. Jarden}.

\bibitem{HaranDiamond}
Dan Haran.
\newblock Hilbertian fields under separable algebraic extensions.
\newblock {\em Invent. Math.}, 137(1):113--126, 1999.

\bibitem{HJPe}
Dan Haran, Moshe Jarden, and Florian Pop.
\newblock The absolute {G}alois group of subfields of the field of totally
  {$S$}-adic numbers.
\newblock {\em Functiones et Approximatio, Commentarii Mathematici}, 2012.

\bibitem{Jarden}
Moshe Jarden.
\newblock Large normal extension of {H}ilbertian fields.
\newblock {\em Mathematische Zeitschrift}, 224:555--565, 1997.

\bibitem{Jech}
Thomas~J. Jech.
\newblock {\em Set Theory}.
\newblock Springer, 2002.

\bibitem{Lang}
Serge Lang.
\newblock {\em Diophantine Geometry}.
\newblock Interscience Publishers, 1962.

\bibitem{Serre}
Jean-Pierre Serre.
\newblock {\em Topics in Galois Theory}.
\newblock Jones and Bartlett Publishers, 1992.

\end{thebibliography}

\end{document}